\newtheorem{teorema}{Theorem}
\newtheorem{lema}[teorema]{Lemma}
\newtheorem{proposicio}[teorema]{Proposition}
\newtheorem{corolari}[teorema]{Corollary}
\theoremstyle{definition}
\newtheorem{definicio}[teorema]{Definition}
\newtheorem{nota}[teorema]{Remark}
\newtheorem{exemple}[teorema]{Example}
\DeclareMathOperator{\C}{C}
\DeclareMathOperator{\E}{E}
\DeclareMathOperator{\F}{\mathbb{F}}
\DeclareMathOperator{\Aut}{Aut}
\DeclareMathOperator{\Ann}{Ann}
\DeclareMathOperator{\Ker}{Ker}
\DeclareMathOperator{\Soc}{Soc}
\DeclareMathOperator{\Z}{Z}
\title{On left braces in which every subbrace is an ideal}
\author{A. Ballester-Bolinches\thanks{Departament de Matem{\`a}tiques, Universitat de Val{\`e}ncia; Dr.\ Moliner, 50; 46100 Burjassot, Val\`encia, Spain; \texttt{Adolfo.Ballester@uv.es}, ORCID 0000-0002-2051-9075; \texttt{Ramon.Esteban@uv.es}, ORCID 0000-0002-2321-8139; \texttt{Vicent.Perez-Calabuig@uv.es}, ORCID 0000-0003-4101-8656} \and R. Esteban-Romero\addtocounter{footnote}{-1}\footnotemark \and L. A. Kurdachenko\thanks{Department of Algebra and Geometry, Oles Honchar Dnipro National University, Dnipro 49010, Ukraine; \texttt{lkurdachenko@gmail.com}; ORCID 0000-0002-6368-7319}\ \thanks{Part of the research of this author has been carried out in the Departament de Matem{\`a}tiques, Universitat de Val{\`e}ncia; Dr.\ Moliner, 50; 46100 Burjassot, Val\`encia, Spain} \and V. P{\'e}rez-Calabuig\addtocounter{footnote}{-3}\footnotemark}
\date{}
\begin{document}
\maketitle

\begin{abstract}
The aim of this paper is to introduce and study the class of all left braces in which every subbrace is an ideal. We call them {\it Dedekind} left braces. It is proved that every finite Dedekind left brace is centrally nilpotent. Structural results about Dedekind left braces and a complete description of those ones whose additive group is an elementary abelian $p$-group are also shown.  As a consequence, every %hypermultipermutational
multipermutational Dedekind left brace whose additive group is an elementary abelian $p$-group is multipermutational of level $2$.  A new class of left braces, the extraspecial left braces, is introduced and play a prominent role in our approach. 

\end{abstract}

  \emph{Mathematics Subject Classification (2020):} 16T25, % Yang-Baxter equations
  16N40, % Nil and nilpotent radicals, sets, ideals, associative rings
  81R50 % 	Quantum groups and related algebraic methods applied to problems in quantum theory [See also 16T20, 17B37]
  
\emph{Keywords:} Dedekind left braces, Yang-Baxter equation, central nilpotency, elementary abelian, extraspecial left braces.

\section{Introduction}
\label{sec:intro}

In his inspiring paper \cite{Drinfeld92}, Drinfeld posed the question of study the so-called set-theoretical solutions of the Yang-Baxter equation (YBE) which has given rise to an intensive research by means of algebraic and combinatorial methods (see \cite{EtingofSchedlerSoloviev99, Gateva-Ivanova04, LuYanZhu00}, for example). The main problem in this context is to find and classify all set-theoretical solutions that satisfy some prescribed properties which are determined by the action of  algebraic structures. Rump's \emph{left braces} stand out among them as a first exponent of a new algebraic structure introduced in \cite{Rump07} to deal with the classification of all involutive non-degenerate set-theoretical solutions of the YBE (solutions, for short).

A left brace is a nonempty set $A$ endowed with two group structures $(A,+)$ and $(A,\cdot)$, such that $(A,+)$ is abelian and
\[ a\cdot (b+c) = a\cdot b + a \cdot c - a, \ \text{for every $a,b,c\in A$.}\]
In particular, a left brace is called \emph{abelian} if both operations coincide. Every left brace provides a solution of the YBE, and viceversa, every solution of the YBE is a restriction of the solution associated to a left brace. Then, it is abundantly clear that the more we know about the algebraic structure of left braces the more we go forward in the classification problem of solutions. 

The illustrative survey of Cedó~\cite{Cedo18} features a selection of the most important algebraic developments on left braces with a great impact on the study of classes of solutions. In particular, the class of \emph{multipermutational solutions} sticks out as it admits an algebraic treatment by means of left-ordered groups and nilpotent notions of left braces (see~\cite{CedoJespersOkninski10, CedoJespersOkninski14, Gateva-IvanovaCameron12, Gateva-Ivanova18-advmath, Smoktunowicz18-tams} for further information). Moreover, this class has been object of an intensive study since it covers almost all solutions: ``among all involutive solutions with at most ten elements, more than 95\% are multipermutation'' (in~\cite{JespersVanAntwerpenVendramin23}). Consequently, the classification problem of solutions undoubtedly calls for an analysis of subfamilies of multipermutation solutions. In this light, it is particularly studied the case of multipermuation solutions of level~$2$ (v. gr. \cite{Gateva-IvanovaMajid11, JedlickaPilitowskaZamojska-Dzienio21, JedlickaPilitowska23, Rump22}).

Like every algebraic structure, the algebraic study of left braces is based on interactions between substructures (subbraces) and quotients (provided by the so-called ideals). In this paper, we introduce the brace theoretic analogue of Dedekind groups, namely \emph{Dedekind left braces} or left braces for which every subbrace is an ideal (see Section~\ref{sec:dedekind}). One of our main motivations comes from the well-known fact that Dedekind groups form a family of groups with nilpotency class~$2$ (it follows from the structural theorems proved by Dedekind~\cite{Dedekind97} and Baer~\cite{Baer33}). A first glance to examples of Dedekind left braces in Section~\ref{sec:dedekind} shows that a structural theorem for them is a challenging open problem. In this direction, our first main result shows that every finite Dedekind left brace is centrally nilpotent (see Theorem~\ref{teo:Dedekind->central-nilp} and Section~\ref{sec:prelim} for preliminary notions on nilpotency of left braces). As a consequence, every solution associated with a finite left brace is multipermutational.

The fact that the subbrace structure coincides with the ideal structure of a left brace not only has an algebraic interest by their own, but also could shed light on the study of simple and soluble left braces and associated simple and soluble solutions (see~\cite{BallesterEstebanJimenezPerezC24-solubleskewbraces} and~\cite{CedoOkninski21} for further information). Delving into the study of Dedekind left braces, one catches a glimpse of the importance of getting a tighter grip of the abelian nature of the left brace structure. According to the classification of finite abelian groups, elementary abelian $p$-groups can be considered as atomic abelian structures. Therefore, it is natural to think about them as a first remarkable family for which a structural theorem can be proved. Section~\ref{sec:dedekind_elem-abel} is devoted to prove such structural theorem: see  Theorems~\ref{teo:descomp-strong->Dedekind} and~\ref{teo:Ahypersoc+Dedekind->descomp-strong}, and Corollary~\ref{cor:Ahypersoc+finit++Dedekind->E0-1-2+polinomis}. These results hinge on two main facts. On one hand, we introduce extraspecial left braces which play a key role in the proof of our results. On the other hand, we prove that %hypermultipermutational 
multipermutational Dedekind left braces whose additive group is an elementary abelian $p$-group have multipermutational level~$2$. It follows that finite Dedekind left braces whose additive group is an elementary abelian $p$-group have central nilpotency length~$2$. Consequently, remarkable families of left braces whose associated multipermutation solutions have level~$2$ are completely described.

\section{Preliminaries}
\label{sec:prelim}

In this section, we fix notation and give some background concepts and results on braces. Although many of them can be considered folklore, others are new. We also include a subsection on results of bilinear forms which are crucial to prove our main results.

In this paper, products on left braces will be denoted by juxtaposition and come before sums on left braces. Left braces $(A,+,\cdot)$ will be referred simply as $A$, whenever the operations $+$ and $\cdot$ are clear from the context. We shall denote with $(A,+)$ (resp. $(A,\cdot)$) the additive (resp. multiplicative) group of a left brace $A$, and we denote by $0$ the common identity element in $(A,+)$ and $(A, \cdot)$. If $A$ is finite, we denote $\pi(A)$ the set of primes dividing the order $|A|$ of $A$.

Following Rump~\cite{Rump07}, left braces can be seen as a generalisation of Jacobson radical rings by means of the so-called \emph{star operation }
\[ \ast \colon (a,b)\in A \times A \longmapsto a \ast b = ab - a - b \in A.\]
We write star products preceding both products and sums in left braces. Moreover, there exists an action of the multiplicative group on the additive group, usually denoted as the homomorphism $\lambda\colon a \in (A,\cdot)\mapsto \lambda_a \in \Aut(A,+)$, such that $ab = a + \lambda_a(b)$ and $a \ast b = \lambda_a(b) - b$ links all operations in $A$. In particular, the following equalities are easily verified and are widely used throughout the paper:
\begin{eqnarray}
\label{ast_distesq} a \ast (b+c) & = & a \ast b + a \ast c\\
\label{ast_prod} (ab)\ast c & = & a \ast (b \ast c) + b\ast c + a \ast c
\end{eqnarray} 

Substructures in left braces are classified according to each embeddings in their additive and multiplicative groups. Let $A$ be a left brace. A non-empty subset $S$ of $A$ is called a \emph{subbrace} if $(S,+)$ and $(S,\cdot)$ are subgroups of $(A,+)$ and $(A,\cdot)$, respectively. We write $S \leq A$. A subbrace $I$ of $A$ is called an \emph{ideal} if it is $\lambda$-invariant and $(I, \cdot) \unlhd (A,\cdot)$. Ideals are characterised as those subbraces $I$ of $A$ such that $A \ast I, I \ast A \subseteq I$. %We denote them by  $I \unlhd A$. 
If $I$ is an ideal of $A$ then $a+I = aI$, and therefore, a quotient left brace structure $(A/I, +, \cdot)$ is provided. Moreover, it holds $(a + I)\ast(b+I) = (a\ast b) + I$, for every $a,b\in A$.

\begin{nota}
\label{nota:semidirect-brace}
Let $A$ be a left brace. Following~\cite{BallesterEsteban22}, we can construct $G_A = [(A,+)]_\lambda(A,\cdot)$ the semidirect product associated with the action $\lambda$: it holds $(a,b)(c,d) = (a + \lambda_b(c), bd)$ for every $a,b,c,d\in A$. Then, we have that for every $a,b\in A$, the commutator
\[ [(0,a),(b,0)] = (0,a)(b,0)(0,a^{-1})(-b,0) = (\lambda_a(b) - b, 0) = (a\ast b, 0)\]
It follows that $I$ is an ideal of $A$ if, and only if, $I \times I = \{(i,j)\mid i,j\in I\} \unlhd G_A$.
\end{nota}

Let $S$ be a subset of a left brace $A$. We denote by $\langle S \rangle$ the \emph{subbrace generated} by $S$, which is the intersection of all subbraces of $A$ containing $S$, and $\langle S\rangle_+$, $\langle S \rangle_{\boldsymbol{\cdot}}$ denote the subgroups generated by $S$ in the additive and the multiplicative group of $A$, respectively. If $S$ is a singleton $\{a\}$, we just write $\langle a\rangle$ for the corresponding generated substructure. If $X$ and $Y$ are subsets of $A$, $X \ast Y = \langle x \ast y\mid x \in X, \, y \in Y\rangle_+$. A left brace $A$ is said to be \emph{abelian} if $A \ast A = 0$, or equivalently, $ab = a+b= b+a = ba$ for every $a,b\in A$. We say that a subbrace (resp. ideal, quotient left brace) is abelian if it is abelian as a left brace structure. For abelian left braces, both group structures coincide, and therefore, we can refer to them as \emph{isomorphic} to the common group structure.

\begin{lema}
\label{lema:a·a=0ciclic}
Let $A$ be a left brace and let  $a \in A$ such that $a\ast a = 0$. Then, $\langle a \rangle = \langle a \rangle_+ = \langle a \rangle_{\boldsymbol{\cdot}}$ is an abelian subbrace of~$A$.  
\end{lema}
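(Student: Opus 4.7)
The plan is to exploit the hypothesis $a \ast a = 0$ in the form $\lambda_a(a) = a$, which makes $a$ a fixed point of its own $\lambda$-action. From this I expect to deduce, by a short induction, that additive and multiplicative powers of $a$ coincide, and then that all star-products inside $\langle a\rangle_+$ vanish.

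First I would establish the key identity $a^n = n\cdot a$ for every $n\in\mathbb{Z}$, where $n\cdot a$ denotes the additive $n$-th multiple. The base case $n=1$ is trivial. For the inductive step, using $ab = a + \lambda_a(b)$ and $\lambda_a(a) = a$ (which follows from $a\ast a = \lambda_a(a)-a = 0$), I would compute
\[ a^{n+1} = a\cdot a^n = a \cdot (n\cdot a) = a + \lambda_a(n\cdot a) = a + n\cdot \lambda_a(a) = a + n\cdot a = (n+1)\cdot a. \]
For $n=-1$, I would verify $a\cdot(-a) = a + \lambda_a(-a) = a - \lambda_a(a) = 0$, so that $(-a) = a^{-1}$, and then iterate. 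This identifies $\langle a\rangle_+$ set-theoretically with $\langle a\rangle_{\boldsymbol{\cdot}}$; call the common subset $H$. Since $H$ is simultaneously a subgroup of $(A,+)$ and of $(A,\cdot)$, it is a subbrace, and therefore $H = \langle a\rangle$.

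Next I would check that $H$ is abelian, i.e.\ $H\ast H = 0$. The right-hand distributivity~(1) together with $a\ast 0 = 0$ and $a\ast(-b) = -(a\ast b)$ gives $a\ast(n\cdot a) = n\cdot(a\ast a) = 0$ for every $n\in\mathbb{Z}$. To extend this to $(m\cdot a)\ast(n\cdot a)=0$, I would use identity~(2): assuming inductively that $a^m\ast c = 0$ for $c = n\cdot a$, we obtain
\[ a^{m+1}\ast c = (a\cdot a^m)\ast c = a\ast(a^m\ast c) + a^m\ast c + a\ast c = 0. \]
Together with the identity $a^n = n\cdot a$, this handles all positive powers on the left.

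The one slightly delicate point, which I expect to be the main obstacle, is the case of negative powers on the left of the star. I would dispose of it by applying identity~(2) to the product $a^{-m}\cdot a^m = 0$: this yields
\[ 0 = 0\ast c = (a^{-m}\cdot a^m)\ast c = a^{-m}\ast(a^m\ast c) + a^m\ast c + a^{-m}\ast c, \]
and since the first two terms on the right have already been shown to vanish, $a^{-m}\ast c = 0$. Hence $H\ast H = 0$, so $H$ is an abelian subbrace, completing the proof.
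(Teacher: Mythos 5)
Your proof is correct and follows essentially the same route as the paper: both arguments reduce to showing $a^n = na$ for all integers $n$ (you phrase the induction via $\lambda_a(a)=a$, the paper via the distributive law $a(b+c)=ab+ac-a$, which is the same computation). Your final induction with identity~(2) to get $(ma)\ast(na)=0$ is valid but more work than needed, since once $a^n=na$ holds for all $n\in\mathbb{Z}$ one has $(ma)(na)=a^{m}a^{n}=(m+n)a=ma+na$ directly, which is exactly how the paper concludes.
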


\begin{proof}
Since $a\ast a = 0$, it holds $a^2 = 2a$. Then, we see that
\[ a^3 = aa^2 = a(a+a) = a^2 + a^2 - a = 2a + 2a - a = 3a\]
By induction, it follows $a^n = na$ for all positive integers $n$. Moreover,
\[ a = a^{-1}a^2 = a^{-1}(2a) = a^{-1}(a+a) = a^{-1}a + a^{-1}a - a^{-1} = 0 + 0 -a^{-1} = -a^{-1}\]
Thus, $a^{-1} = -a$. Then,
\[ a^{-2} = a^{-1}a^{-1} = a^{-1}(-a) = a^{-1}(0-a) = a^{-1} - a^{-1}a + a^{-1} = -a - 0 - a = -2a\]
By induction, we obtain that $a^{-n} = -na$, for all positive integer~$n$. Therefore, it follows that $(ka)(na) = (k+n)a$ and $(ka)^{-1} = -ka$, for all integers $k,n$. This shows that $\langle a \rangle = \langle a \rangle_+ = \langle a \rangle_{\boldsymbol{\cdot}}$ is an abelian subbrace of~$A$. 
\end{proof}

\emph{Homomorphisms} between two left braces $A$ and $B$ are mappings conserving sums and products; $A$ and $B$ are called \emph{isomorphic} if there exists an \emph{isomorphism} (i.e. a bijective homomorphism) between them; and the \emph{direct product} $A \oplus B$ is constructed as the direct product of each sum and product of $A$ and~$B$.

%\begin{lema}
%\label{lema:descomp-direct-prod}
%Let $A$ be a finite left brace and let $I$ be an ideal of $A$ such that $A/I$ is abelian. If $\pi(I) \cap \pi(A/I) = \emptyset$, then there exists a subbrace $B$ of $A$ such that $A = I \oplus B$.
%\end{lema}
%
%\begin{proof}
%Set $\pi = \pi(I)$ and $\sigma = \pi(A/I)$. Since $\pi \cap \sigma = \emptyset$, it follows that the additive group of $I$ is a Hall $\pi$-subgroup of $(A,+)$. Let $(B,+)$ be a Hall $\sigma$-subgroup of $(A,+)$ and let $u,v \in B$ such that $k \in \sigma$ is the additive order of $v$. Then, $u \ast v \in I$ as $A/I$ is an abelian brace. On the other hand, $k(u \ast v) = u \ast (kv) = u \ast 0 = 0$. It follows that the additive order of $u \ast v$ divides $k$. Therefore, $u \ast v = 0$ as $\pi \cap \sigma = \emptyset$. Hence, we conclude that $B$ is a subbrace.
%\end{proof}

\bigskip

The following particular substructures play a key role in the structural study of a left brace~$A$. Given $\emptyset \neq S \subseteq A$, the following sets
\begin{align*}
 \Ann_A^l(S) & = \{a \in A \mid a \ast x = 0,\, \text{for all $x\in S$}\} = \\
& =\{a \in A \mid ax = a+x,\,\text{for all $x\in S$}\};\\
%\Ann_A^r(S) & = \{a \in A \mid x \ast a = 0,\, \text{for all $x\in S$}\} = \\
% & =\{a \in A \mid xa = x+a,\,\text{for all $x\in S$}\};\\
\Ann_A(S) & = \{a \in A\mid a \ast x = x \ast a = 0, \, \text{for all $x\in S$}\} = \\
& =\{a \in A \mid ax = a+x = xa,\,\text{for all $x\in S$} \} = \Ann_A^l(S) \cap \C_{(A,\cdot)}(S);
\end{align*}
are respectively defined as the \emph{left annihilator} %, \emph{right annihilator}, 
and the \emph{annihilator} of $S$. In case that $S = A$, they are respectively called the \emph{socle} $\Soc(A)$
%, the \emph{fix} $\Fix(A)$, 
and the \emph{centre} $\zeta(A)$ of the left brace $A$ (see \cite{BonattoJedlicka23}, \cite{CatinoColazzoStefanelli19} or \cite{CedoSmoktunowiczVendramin19}, for example). It follows that a left brace $A$ is abelian if, and only if, $\zeta(A) = A$. Moreover, every subbrace of the centre of a left brace is an abelian ideal and every subbrace of the socle is abelian. 

\begin{nota}
\label{nota:centre-semidirect}
Let $A$ be a left brace. According to Remark~\ref{nota:semidirect-brace}, it follows that $a \in \zeta(A)$ if, and only if, $(a,0), (0,a) \in \zeta(G_A)$.
\end{nota}

The following lemma shows important properties of annihilators in a brace. Although it can be derived from previous works (e.g. \cite[Section 3]{ColazzoFerraraTrombetti-publ-math}), we include a proof for the sake of completeness.

\begin{lema}
\label{lema:left-right-ann_substruct}
Let $A$ be a left brace and let $\emptyset \neq S\subseteq A$. Then, the following hold
\begin{enumerate}
\item $\Ann_A^l(S) \leq (A, \cdot)$. Moreover, if $S$ is an ideal, $(\Ann_A^l(S),\cdot)\unlhd (A,\cdot)$.
%\item $\Ann_A^r(S) \leq (A, +)$. 
\item $\Ann_A(S)$ is a subgroup of the centraliser of $S$ in the multiplicative group $(A,\cdot)$. Moreover, if $S$ is an ideal, then $(\Ann_A(S), \cdot) \unlhd (A,\cdot)$. 
\end{enumerate}
\end{lema}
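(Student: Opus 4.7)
The plan is to verify each item by direct manipulation of the star operation, exploiting the identities \eqref{ast_distesq} and \eqref{ast_prod} together with the reformulation $a \in \Ann_A^l(S) \iff \lambda_a(x) = x$ for every $x \in S$ (since $a \ast x = \lambda_a(x) - x$).

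For item (1), I would first recast $\Ann_A^l(S) = \{a \in A \mid \lambda_a|_S = \mathrm{id}_S\}$. Because $\lambda \colon (A,\cdot) \to \Aut(A,+)$ is a group homomorphism, closure under products and inverses follows at once. A purely computational route is also available: for $a,b \in \Ann_A^l(S)$ and $x \in S$, identity \eqref{ast_prod} yields $(ab)\ast x = a \ast (b \ast x) + b \ast x + a \ast x = 0$, and applying \eqref{ast_prod} to $0 = (a \cdot a^{-1}) \ast x$ together with injectivity of $\lambda_a$ produces $a^{-1} \ast x = 0$. For normality when $S$ is an ideal, the point is $\lambda$-invariance: for $g \in A$ and $x \in S$, $\lambda_g^{-1}(x) \in S$, so $\lambda_{gag^{-1}}(x) = \lambda_g \lambda_a \lambda_g^{-1}(x) = \lambda_g \lambda_g^{-1}(x) = x$.

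Item (2) is the most direct: for $a, b \in \Ann_A^r(S)$ and $x \in S$, identity \eqref{ast_distesq} gives $x \ast (a + b) = x \ast a + x \ast b = 0$, and $0 = x \ast 0 = x \ast a + x \ast (-a)$ forces $x \ast (-a) = 0$.

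For item (3), from $ax = a + x = xa$ it is immediate that $\Ann_A(S)$ lies in the multiplicative centraliser of $S$. The delicate point, and the main obstacle, is that $\Ann_A^r(S)$ is only known to be an additive subgroup, so the multiplicative closure of $\Ann_A(S) = \Ann_A^l(S) \cap \Ann_A^r(S)$ requires coupling additive and multiplicative data. Given $a, b \in \Ann_A(S)$, the multiplicative commutativity of $a, b$ with each $x \in S$ passes to $ab$; combined with $(ab) \ast x = 0$ from (1), which gives $(ab) \cdot x = ab + x$, the commutativity forces $x \cdot ab = ab \cdot x = ab + x = x + ab$, i.e.\ $x \ast ab = 0$. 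A parallel argument, using that $a^{-1}$ also commutes with $x$ and lies in $\Ann_A^l(S)$ by (1), handles inverses. For the normality claim when $S$ is an ideal, given $a \in \Ann_A(S)$ and $g \in A$, $\lambda$-invariance and multiplicative normality of $S$ place $g^{-1}xg$ in $S$; hence $a$ commutes multiplicatively with $g^{-1}xg$, which transports to multiplicative commutativity of $gag^{-1}$ with $x$. Coupling this with $gag^{-1} \in \Ann_A^l(S)$ (already established in (1)) yields $gag^{-1} \in \Ann_A(S)$.
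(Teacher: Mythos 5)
Your proof is correct and follows essentially the same route as the paper's: the $\lambda$-homomorphism (equivalently, direct star computations) for item (1), bilinearity of $\ast$ in the second argument for item (2), and the coupling of left-annihilation with multiplicative commutativity for item (3), including the same conjugation argument for normality. The only difference is that you spell out the verification that $\Ann_A(S)$ is closed under products and inverses, which the paper dismisses as holding ``by definition''; that is a welcome bit of extra care rather than a new idea.
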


\begin{proof}
1. Let $a,b \in \Ann_A^l(S)$. For every $x\in S$, it holds
\[  x = a^{-1}ax = a^{-1}(a+x) = 0 - a^{-1} + a^{-1}x,\]
i.e. $a^{-1}x = a^{-1} + x$. Moreover, $(ab)x = a(bx) = a(b+x) = ab + ax - a = ab + x$. Thus, $(\Ann_A^l(S), \cdot) \leq (A,\cdot)$. 

Assume that $S$ is an ideal. Then, we can consider the restriction of the $\lambda$-action $\bar{\lambda}\colon a \in A \rightarrow \left. \lambda_a\right|_S \in \Aut(S,+)$. Thus, $a \in \Ker \bar{\lambda}$ if, and only if, $\lambda_a(s) = s$ for every $s\in S$, i.e. $a \in \Ann_A^l(S)$. Hence, $\Ann_A^l(S) = \Ker \bar{\lambda} \unlhd (A,\cdot)$.

% 2. It clearly follows from \eqref{ast_distesq}.

2. By the previous statement, $\Ann_A(S)$ is a subgroup of the centraliser of $S$ in the multiplicative group $(A,\cdot)$. Suppose that $S$ is an ideal of $A$. Since $(S,\cdot)\unlhd (A,\cdot)$ it follows that the centraliser of $S$ in the multiplicative group $(A,\cdot)$ is a normal subgroup. 

Let $b \in A$ and $a \in \Ann_A(S)$. Thus, $bab^{-1}\in \Ann_A^l(S)$ and $(bab^{-1})x = x(bab^{-1})$, for every $x\in S$. Hence,
\[ x(bab^{-1}) = (bab^{-1})x = bab^{-1} + x = x + bab^{-1}, \]
i.e. $bab^{-1}\in \Ann_A(S)$.
\end{proof}

In particular, it follows that the socle and the centre of a left brace are ideals. % while the fix subbrace of a left brace is a left ideal. 

%Then, nilpotency notions of left braces are characterised by series of iterated socles, fixes and centres. 

Let $A$ be a left brace. The left and right series of $A$ are respectively defined as 
\begin{align*}
& A^1 = A  \geq A^2 = A \ast A \geq \ldots \geq A^{n+1} = A \ast A^n \geq \ldots\\
& A^{(1)} = A \geq A^{(2)} = A \ast A \geq \ldots \geq A^{(n+1)} = A^{(n)}\ast A \geq \ldots 
\end{align*}
Following~\cite{Rump07}, $A$ is said to be \emph{left (right) nilpotent} if the left (right) series reaches $0$. Then, $A$ is said to be \emph{centrally nilpotent} or \emph{nilpotent in the sense of Smoktunowicz} if it is left and right nilpotent (see  \cite{BallesterEstebanFerraraPerezCTrombetti-arXiv-cent-nilp,BonattoJedlicka23, CedoSmoktunowiczVendramin19} for further information). 

Now, we consider
\begin{align*}
& 0 = \Soc_0(A) \leq \Soc_1(A) \leq \ldots \leq \Soc_{n}(A) \leq \Soc_{n+1}(A) \leq \ldots \\
& 0 = \zeta_0(A)  \leq \zeta_1(A)  \leq \ldots \leq \zeta_n(A) \leq \zeta_{n+1}(A)\leq \ldots 
\end{align*}
ascending series of ideals defined by the recursive rule:
\[ \begin{array}{ll}
 \Soc_{n+1}(A)/\Soc_n(A) = \Soc(A/\Soc_n(A)),& \zeta_{n+1}(A)/\zeta_n(A) = \zeta(A/\zeta_n(A)); \end{array}\]
for every non-negative integer $n$. These are respectively called the \emph{socle (resp. upper central) series} of $A$. A left brace is said to have \emph{finite multipermutational level} if $\Soc_n(A) = A$ for some $n$. %On the other hand, we consider
%\[ 0 = \Fix_0(A) \leq \Fix_1(A)   \leq \ldots \leq \Fix_n(A) \leq \Fix_{n+1}(A)\leq \ldots\]
%a series of left ideals defined by the recursive rule:
%\[ \Fix_{n+1}(A) = \{a \in A \mid x \ast a \in \Fix_{n}(A), \, \text{for every $x\in A$}\}\]
%for every non-negative integer $n$. This is called the \emph{fix series} of~$A$.

%\begin{proposicio}[{\cite{BallesterEstebanPerezC-JH,BonattoJedlicka23,CedoSmoktunowiczVendramin19}}]
%\label{prop:caract_nilpot}
%Let $A$ be a left brace. Then, $A$ is left (right, centrally) nilpotent if, and only if, the  fix (socle, upper central) series reaches~$A$. Moreover, the length of this series is called \emph{the left (right, centrally) nilpotent level} of~$A$.
%\end{proposicio}

\subsection*{Bilinear forms}
\label{subsec:bilinear}

We end the section of preliminaries with a subsection which features some concepts and results on bilinear forms over finite dimensional vector spaces. 
%These play a key role in Subsection~\ref{subsec:extraspecial}. 
\emph{From now on, $V$ denotes a finite dimensional vector space over a field $\mathbb{F}$ and $\phi$ a bilinear form on $V$.}

Let $U \leq V$. We write
\[ 
\begin{array}{l}
^\perp U = \{x \in V  \mid \phi(x,u) = 0, \, \text{for all elements $u\in U$}\},\\
U^\perp = \{x \in V \mid \phi(u,x) = 0,\, \text{for all elements $u \in U$}\}.\end{array}\]
It turns out that $^\perp U$ and $U^\perp$ are subspaces of $V$, which are called the \emph{left (resp. right) orthogonal complement} of $U$ in $V$. 

The following result is well-known.

\begin{proposicio}
\label{prop:phi_non-degenerate_direct_sum}
Given $U \leq V$,  it follows that
\[ \dim_{\F} U + \dim_{\F}\, ^\perp U - \dim_{\F}(U \cap V^\perp) = \dim_{\F} V\]
Moreover, if the restriction of $\phi$ over $U$ is non-degenerate, then~$V =U\oplus\,^\perp U$.
\end{proposicio}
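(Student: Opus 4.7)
The plan is to reduce the formula to the rank–nullity theorem applied to a suitable linear map $V \to U^*$. Concretely, I would introduce
\[
\psi \colon V \longrightarrow U^{*}, \qquad \psi(v)(u) = \phi(v,u),
\]
which is $\mathbb{F}$-linear. Its kernel is exactly $\{v \in V : \phi(v,u) = 0 \text{ for all } u \in U\} = {}^{\perp}U$, so rank–nullity gives $\dim_{\F} V - \dim_{\F} {}^{\perp}U = \dim_{\F}\operatorname{im}\psi$. The task then becomes to compute $\dim_{\F} \operatorname{im}\psi$ in terms of the stated invariants.

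For that I would compute the annihilator of $\operatorname{im}\psi$ inside $U$, namely the subspace $W_0 = \{u \in U : \ell(u) = 0 \text{ for every } \ell \in \operatorname{im}\psi\}$. Unwinding the definition, $u \in W_0$ if and only if $\phi(v,u) = 0$ for every $v \in V$, i.e.\ $u \in V^{\perp}$. Hence $W_0 = U \cap V^{\perp}$, and the standard duality formula $\dim_{\F}\operatorname{im}\psi = \dim_{\F} U - \dim_{\F} W_0$ yields
\[
\dim_{\F} V - \dim_{\F} {}^{\perp}U = \dim_{\F} U - \dim_{\F}(U \cap V^{\perp}),
\]
which is the displayed identity after rearrangement.

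For the second assertion, assume $\phi|_{U}$ is non-degenerate. The right radical of $\phi|_U$ is $U \cap U^{\perp}$ and the left radical is $U \cap {}^{\perp}U$; non-degeneracy forces both to vanish. In particular $U \cap V^{\perp} \subseteq U \cap U^{\perp} = 0$, so the first part of the proposition simplifies to $\dim_{\F} U + \dim_{\F} {}^{\perp}U = \dim_{\F} V$. Combined with $U \cap {}^{\perp}U = 0$, this forces $V = U \oplus {}^{\perp}U$.

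I do not expect a genuine obstacle here; the only step that needs a little care is the identification of the annihilator of $\operatorname{im}\psi$ with $U \cap V^{\perp}$, because one must distinguish the two sides of $\phi$ (the proposition uses the left orthogonal complement of $U$ but the right radical of $V$). Once that bookkeeping is done, everything reduces to rank–nullity and standard duality between a finite-dimensional vector space and its dual.
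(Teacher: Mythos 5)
Your argument is correct: the map $\psi\colon V\to U^{*}$, $\psi(v)(u)=\phi(v,u)$, has kernel ${}^{\perp}U$, the annihilator of $\operatorname{im}\psi$ in $U$ is exactly $U\cap V^{\perp}$, and rank--nullity plus the duality formula give the displayed identity; the second assertion then follows since non-degeneracy of $\phi|_{U}$ (in finite dimension) kills both radicals, so $U\cap V^{\perp}=U\cap{}^{\perp}U=0$. The paper states this proposition as well-known and supplies no proof, so there is nothing to compare against; your rank--nullity/duality argument is the standard one and fills the gap correctly.
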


A bilinear form $\phi$ is called \emph{strong non-degenerate} if the restriction of $\phi$ over every non-zero subspace is non-degenerate.

\begin{corolari}
\label{cor:2.11strongnondegenerate_matrice triangular}
Assume that  $\phi$ is strong non-degenerate on $V$. Then, $V$ has a basis in which the matrix of $\phi$ is upper triangular.
\end{corolari}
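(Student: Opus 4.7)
The plan is to argue by induction on $n=\dim_{\F} V$. The base case $n=1$ is immediate, since any $1\times 1$ matrix is vacuously upper triangular.

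For the inductive step, I would pick any non-zero $e_1\in V$ and set $U=\langle e_1\rangle$. Strong non-degeneracy forces the restriction of $\phi$ to $U$ to be non-degenerate, and on a one-dimensional subspace this is equivalent to $\phi(e_1,e_1)\neq 0$. Hence Proposition~\ref{prop:phi_non-degenerate_direct_sum} applies and yields the decomposition $V=U\oplus\,^\perp U$. Moreover, the restriction of $\phi$ to $^\perp U$ remains strong non-degenerate, since every subspace of $^\perp U$ is in particular a subspace of $V$.

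By the induction hypothesis applied to $^\perp U$, there is a basis $\{e_2,\ldots,e_n\}$ of $^\perp U$ in which the matrix of $\phi|_{\,^\perp U}$ is upper triangular, that is, $\phi(e_i,e_j)=0$ whenever $i>j\geq 2$. Adjoining $e_1$ gives a basis of $V$, and it only remains to check the entries of the first column: for each $i\geq 2$ the vector $e_i$ lies in $^\perp U$, so by definition $\phi(e_i,e_1)=0$. Therefore the matrix of $\phi$ in the basis $\{e_1,\ldots,e_n\}$ is upper triangular.

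The only delicate point I foresee is ensuring that every non-zero vector $e_1$ can serve as the starting block, which is precisely what strong non-degeneracy (as opposed to plain non-degeneracy) guarantees; the rest is a routine recursive application of the direct sum decomposition provided by Proposition~\ref{prop:phi_non-degenerate_direct_sum}.
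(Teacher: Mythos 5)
Your proof is correct and follows essentially the same route as the paper's: induction on the dimension, splitting off a one-dimensional subspace $U=\langle e_1\rangle$ via Proposition~\ref{prop:phi_non-degenerate_direct_sum} and applying the induction hypothesis to $^\perp U$. You even supply slightly more detail than the paper by explicitly noting that strong non-degeneracy is inherited by $^\perp U$ and by verifying the vanishing of the first-column entries.
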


\begin{proof}
Let $n=\dim_{\F} V$ and assume the assertion true for every vector space of dimension less than $n$ with $n > 1$. Take $U_1 = \langle u_1\rangle$ for some $0\neq u_1\in V$. Since the restriction of $\phi$ over $U_1$ is non-degenerate, Proposition~\ref{prop:phi_non-degenerate_direct_sum} shows that $V = U_1\oplus\, ^\perp U_1$. By induction, there exists a basis $\{u_2, \ldots, u_n\}$ of $^\perp U_1$ in which the basis of $\phi|_{^\perp U_1}$ is upper triangular. Hence, $\{u_1, u_2, \ldots, u_n\}$ is a basis of $V$ in which the matrix of $\phi$ is upper triangular.
\end{proof}

\begin{corolari}
\label{cor:2.12phi(x,x)neq0_triangularmatrix}
Assume that $\phi(x,x) \neq 0$ for every $0\neq x \in V$. Then, $V$ has a basis in which the matrix of $\phi$ is upper triangular.
\end{corolari}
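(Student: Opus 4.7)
The plan is to reduce this statement to Corollary~\ref{cor:2.11strongnondegenerate_matrice triangular} by showing that the hypothesis $\phi(x,x) \neq 0$ for every $0 \neq x \in V$ forces $\phi$ to be strong non-degenerate on $V$; the conclusion then follows at once from the previous corollary.

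For the reduction, let $U$ be an arbitrary non-zero subspace of $V$, and suppose, aiming at a contradiction, that the restriction $\phi|_U$ is degenerate. Then at least one of the radicals of $\phi|_U$ is non-trivial, so there exists $0 \neq u \in U$ such that either $\phi(u, w) = 0$ for every $w \in U$ or $\phi(w, u) = 0$ for every $w \in U$. Specialising to $w = u$ yields $\phi(u,u) = 0$, contradicting the standing hypothesis. Hence $\phi|_U$ is non-degenerate for every non-zero subspace $U \leq V$; in other words, $\phi$ is strong non-degenerate on $V$.

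With strong non-degeneracy in hand, Corollary~\ref{cor:2.11strongnondegenerate_matrice triangular} supplies a basis of $V$ in which the matrix of $\phi$ is upper triangular, which is the desired conclusion. I do not expect any real obstacle since the whole argument is essentially a one-line reduction. As a self-contained alternative, one could also mimic the inductive proof of Corollary~\ref{cor:2.11strongnondegenerate_matrice triangular}: choose any $0 \neq u_1 \in V$, note that $\phi(u_1,u_1) \neq 0$ guarantees that $\phi$ restricted to $\langle u_1 \rangle$ is non-degenerate, invoke Proposition~\ref{prop:phi_non-degenerate_direct_sum} to split $V = \langle u_1 \rangle \oplus \, ^\perp \langle u_1 \rangle$, and apply the induction hypothesis to $^\perp \langle u_1 \rangle$, since the property $\phi(x,x) \neq 0$ for $0 \neq x$ is automatically inherited by every subspace.
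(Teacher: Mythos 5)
Your reduction is correct and is precisely the argument the paper leaves implicit (Corollary~\ref{cor:2.12phi(x,x)neq0_triangularmatrix} is stated without proof as an immediate consequence of Corollary~\ref{cor:2.11strongnondegenerate_matrice triangular}): the hypothesis $\phi(x,x)\neq 0$ for all $0\neq x$ rules out any nonzero vector in a radical of a restriction, so $\phi$ is strong non-degenerate and the previous corollary applies. Both your one-line reduction and your alternative direct induction are valid and match the paper's intent.
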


\begin{corolari}
\label{cor:no-isotrop->dim<=2}
Suppose that $V$ is a vector space over a finite field $\F_q$, with $q = p^t$ for some prime $p$. Let $\phi$ be a bilinear form on $V$ such that $\phi(x,x) \neq 0$ for every $0\neq x \in V$. Then, $V$ has dimension at most $2$.
\end{corolari}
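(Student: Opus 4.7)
The plan is to invoke the Chevalley--Warning theorem applied to the map $Q\colon V\to \F_q$ defined by $Q(v)=\phi(v,v)$, viewed as a homogeneous polynomial of degree~$2$. Concretely, I would first fix any basis $\{e_1,\ldots,e_n\}$ of $V$, with $n=\dim_{\F_q}V$, and set $m_{ij}=\phi(e_i,e_j)\in\F_q$. For $v=\sum_{i=1}^n x_i e_i$ one obtains
\[
Q(v)=\phi(v,v)=\sum_{i,j=1}^{n} m_{ij}\,x_i x_j,
\]
which is a homogeneous polynomial of total degree~$2$ in the indeterminates $x_1,\ldots,x_n$ over $\F_q$.

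Next, I would argue by contradiction: suppose that $n\geq 3$. Since $\deg Q=2<n$, the Chevalley--Warning theorem guarantees that the number of zeros of $Q$ in $\F_q^{\,n}$ is divisible by the characteristic $p$ of $\F_q$. The origin is clearly a zero and $p\geq 2$, so at least one further tuple $(x_1,\ldots,x_n)\neq 0$ must satisfy $Q(x_1,\ldots,x_n)=0$. Translating back, this produces a nonzero $v\in V$ with $\phi(v,v)=0$, contradicting the hypothesis. Therefore $\dim_{\F_q}V\leq 2$.

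I do not foresee any real obstacle: the statement reduces cleanly to this standard application of Chevalley--Warning, and the previous Corollary~\ref{cor:2.12phi(x,x)neq0_triangularmatrix} is not actually required here. An alternative, more self-contained route would be to use the upper-triangular basis provided by Corollary~\ref{cor:2.12phi(x,x)neq0_triangularmatrix} in order to restrict attention to a $3$-dimensional subspace spanned by any three basis vectors and then invoke the classical fact that every ternary quadratic form over a finite field represents zero nontrivially; but since that classical fact is itself a direct instance of Chevalley--Warning, the argument above is the most economical choice.
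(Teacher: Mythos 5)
Your proof is correct and takes essentially the same route as the paper's: both apply Chevalley(--Warning) to the homogeneous degree-$2$ polynomial $\phi(v,v)$ in $n\geq 3$ variables to produce a nonzero isotropic vector, contradicting the hypothesis. The only difference is that the paper first passes to an upper-triangular basis of a $3$-dimensional subspace via Corollary~\ref{cor:2.12phi(x,x)neq0_triangularmatrix}, a step you rightly note is dispensable.
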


\begin{proof}
Assume that $\dim_{\F_q}(V) > 2$. By Corollary~\ref{cor:2.12phi(x,x)neq0_triangularmatrix}, we can take a subspace $S \leq V$ with a basis $\{b_1, b_2, b_3\}$ such that $\phi(b_2,b_1) = \phi(b_3,b_1) = 0$. Set $\sigma_{i,j} := \phi(b_i,b_j)$, for $1\leq i \leq j \leq 3$.  Let $x = \lambda_1b_1+\lambda_2b_2 + \lambda_3b_3 \in S$ with $\lambda_i \in \F_q$, $1\leq i \leq 3$. Then, it holds
\[ \phi(x,x) = \sum_{1 \leq i \leq j \leq 3} \lambda_i\lambda_j \sigma_{i,j} \]
By Chevalley's Theorem (see \cite[Corollary 6.6]{LidlNiederreiter97}, for example), 
\[ \sum_{1 \leq i \leq j \leq 3}\sigma_{i,j}X_iX_j\]
has a non-zero solution. Therefore, there exists $0 \neq x\in V$ such that $\phi(x,x) = 0$, a contradiction.
\end{proof}

\section{Dedekind left braces}
\label{sec:dedekind}

Our definition of Dedekind left braces is inspired by the correspondent notion of Dedekind groups. 

\begin{definicio}
A left brace $A$ is said to be \emph{Dedekind} if every subbrace of $A$ is an ideal.
\end{definicio}

It is clear that the property of being Dedekind is inherited by subbraces and quotients.

\begin{nota}
\label{nota:no-proper-subbrace}
Obviously, every brace with no proper subbraces is Dedekind. Theorem~A in \cite{BallesterEstebanJimenezPerezC24-solubleskewbraces} shows that finite left braces with no proper subbraces are abelian left braces isomorphic to a cyclic group of prime order. As being a subbrace is a transitive property in a left brace, by induction, this implies that every finite left brace has a subbrace of prime order.
\end{nota}

Of course, abelian left braces are Dedekind, but there are left braces whose multiplicative group is isomorphic to the additive group which are not Dedekind: the left brace corresponding to \texttt{SmallBrace(8, 7)} in the \textsf{Yang--Baxter} library~\cite{VendraminKonovalov22-YangBaxter-0.10.2} for \textsf{GAP}~\cite{GAP4-12-2} has both additive and multiplicative groups isomorphic to the direct product of cyclic groups $\C_4\times \C_2$ but it is not Dedekind as it has a subbrace of order $2$ which is not ideal.

This example also shows that having both the additive and multiplicative groups Dedekind it is not sufficient for a left brace to be Dedekind. This situation changes if we assume the additive group to be cyclic.

\begin{proposicio}
\label{prop:ciclic+dedekind->dedekind}
Let $A$ be a left brace with a cyclic additive group and a Dedekind multiplicative group. Then, $A$ is Dedekind.
\end{proposicio}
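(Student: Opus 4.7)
The plan is to verify directly that every subbrace $S$ of $A$ satisfies the two conditions that characterise ideals, namely that $(S,\cdot)$ is normal in $(A,\cdot)$ and that $S$ is $\lambda$-invariant (equivalently, that $A\ast S,\,S\ast A\subseteq S$, as recalled in Section~\ref{sec:prelim}).

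Normality of $(S,\cdot)$ in $(A,\cdot)$ will be immediate from the Dedekind hypothesis on the multiplicative group: every subgroup of $(A,\cdot)$ is then normal, so in particular so is $(S,\cdot)$. The substantive step is therefore the $\lambda$-invariance, and for that I would rely on the classical fact that every subgroup of a cyclic group is characteristic—indeed fully invariant. Concretely, if $(A,+)=\langle g\rangle_+$ and $S$ is a subbrace, then $(S,+)=\langle kg\rangle_+$ for some non-negative integer $k$; any endomorphism $\varphi$ of $(A,+)$ sends $g$ to $mg$ for some integer $m$, so $\varphi(kg)=mkg\in \langle kg\rangle_+$. Applied to the automorphisms $\lambda_a$ for $a\in A$, this yields $\lambda_a(S)\subseteq S$.

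Combining the two conditions, $S$ is an ideal; as $S$ was an arbitrary subbrace, $A$ is Dedekind. No real obstacle is expected, the argument being essentially a two-line verification once the correct characterisation of ideals is in place. It may be worth noting that the cyclicity hypothesis on $(A,+)$ cannot be dropped: the brace \texttt{SmallBrace(8, 7)} mentioned just before the statement has both additive and multiplicative groups isomorphic to the Dedekind group $\C_4\times\C_2$ and still fails to be Dedekind, which reflects precisely the fact that subgroups of non-cyclic abelian groups need not be characteristic.
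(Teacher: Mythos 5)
Your proposal is correct and follows essentially the same route as the paper's own proof: $\lambda$-invariance of a subbrace comes from the fact that subgroups of the cyclic group $(A,+)$ are characteristic (indeed fully invariant), and normality of $(S,\cdot)$ comes from the Dedekind hypothesis on the multiplicative group. The paper merely phrases the first step as ``every subgroup of $(A,+)$ is $\lambda$-invariant, hence a subbrace,'' but the substance is identical.
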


\begin{proof}
Since $(A,+)$ is cyclic, it follows that  every subgroup of $(A,+)$ is $\lambda$-invariant. Thus, every subgroup of $(A,+)$ is a subbrace of $A$. Since both $(A,+)$ and $(A,\cdot)$ are Dedekind groups, we conclude that every subbrace is an ideal.
\end{proof}

The condition on the multiplicative group can not be removed in general: there exists a left brace with additive group isomorphic to  the cyclic group $\C_6$ and multiplicative group isomorphic to the symmetric group $\text{Sym}(3)$ such that the cyclic subgroup of order $2$ in $\C_6$ is a subbrace but not an ideal. 

Moreover, Proposition~\ref{prop:ciclic+dedekind->dedekind} can not been extended for one-generated braces. In~\cite{BallesterEstebanKurdachenkoPerezC-arXiv-actions-ab-group-braces}, one-generated left braces $A$ with $\Soc_2(A) = A$ are shown to admit useful descriptions. However, this condition it is not sufficient to yield Dedekind left braces.

\begin{exemple}
Let $A$ be the left brace corresponding to \texttt{SmallBrace(16, 72)} in the \textsf{Yang--Baxter} library~\cite{VendraminKonovalov22-YangBaxter-0.10.2} for \textsf{GAP}~\cite{GAP4-12-2}. It has additive group $(A,+)$ isomorphic to $\C_4\times \C_4$, and multiplicative group $(A,\cdot)$ isomorphic to $\C_8 \times \C_2$ which is a Dedekind group. Moreover, $\Soc_2(A) = A$ but $A$ is not a Dedekind left brace since it has a subbrace of order $2$ which is not an ideal. Indeed, it is a minimal counterexample. In addition, the left brace $A$ corresponding to \texttt{SmallBrace(81,167)} serves also as a minimal counterexample of odd order.
\end{exemple}

The previous examples evince that there is a long way off to find a characterisation for Dedekind left braces. In this section, we present a first remarkable result for the case of finite left braces.

\begin{teorema}
\label{teo:Dedekind->central-nilp}
Every finite Dedekind left brace is centrally nilpotent. 
\end{teorema}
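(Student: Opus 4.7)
The plan is to argue by strong induction on $|A|$, reducing the theorem to the claim that $\zeta(A) \neq 0$ for every nonzero finite Dedekind left brace $A$. Since the Dedekind property is inherited by quotients, once this is established the brace $A/\zeta(A)$ is Dedekind of strictly smaller order, hence centrally nilpotent by the inductive hypothesis, and so is $A$.

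First I would reduce to the case of prime-power order by a Sylow-type decomposition. For each prime $p$, the $p$-primary component $A_p$ of $(A,+)$ is characteristic in $(A,+)$, hence $\lambda$-invariant; together with the identities $bc = b + \lambda_b(c)$ and $\lambda_b(b^{-1}) = -b$, this shows that $A_p$ is a subbrace, which by the Dedekind hypothesis is an ideal. For distinct primes $p \neq q$, the ideal characterisations $A_p \ast A \subseteq A_p$ and $A \ast A_q \subseteq A_q$ together with $A_p \cap A_q = 0$ force $A_p \ast A_q = 0$, so multiplication respects the additive Sylow decomposition and $A = \bigoplus_p A_p$ as left braces. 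Since $\zeta(A) = \bigoplus_p \zeta(A_p)$, it suffices to establish $\zeta(A) \neq 0$ in the case $|A| = q^n$ for a single prime $q$.

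In this prime-power case, I take a minimal nonzero subbrace $M$ of $A$; by Dedekind $M$ is an ideal, and since $M$ has no proper nonzero subbrace, Theorem~A of \cite{BallesterEstebanJimenezPerezC-arXiv-solubleskewbraces} forces $M \cong \C_q$ as a trivial (abelian) left brace. Since $M$ is $\lambda$-invariant, restriction yields a homomorphism $(A,\cdot) \to \Aut(M,+) \cong \mathbb{Z}/(q-1)\mathbb{Z}$ from a $q$-group into a group of order coprime to $q$, hence trivial; equivalently $\lambda_a(m) = m$ for all $a \in A$ and $m \in M$, i.e., $M \subseteq \Fix(A)$. For $M \subseteq \Soc(A)$, note that $(M,\cdot)$ is a normal subgroup of order $q$ in the $q$-group $(A,\cdot)$; the standard conjugation orbit-count on $M$ (the $q-1$ non-identity elements split into orbits of $q$-power size, forcing them to be singletons) places $M$ inside the centre of $(A,\cdot)$. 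Then for $a \in A$ and $m \in M$, the centrality $am = ma$ combined with $\lambda_a(m) = m$ simplifies to $a + m = m + \lambda_m(a)$, whence $\lambda_m(a) = a$. Therefore $M \subseteq \Soc(A) \cap \Fix(A) = \zeta(A)$, which completes the inductive step.

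The main obstacle I expect is the verification of $M \subseteq \Soc(A)$: the Dedekind hypothesis is essential to force $(M,\cdot)$ normal in $(A,\cdot)$, the $q$-group structure of $(A,\cdot)$ is then needed to promote this normal subgroup of order $q$ to a central one, and this centrality must finally be translated via the $\lambda$-formula into the triviality of $\lambda_m$ for each $m \in M$.
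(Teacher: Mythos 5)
Your proof is correct, and its key step is genuinely different from the paper's. The Sylow reduction coincides with Proposition~\ref{prop:ded->descomp-sylow}, but for the prime-power case the paper proves the stronger Proposition~\ref{prop:Dedekind->orderp<Soc} (every subbrace of order $p$ lies in $\Soc(A)$) via a lengthy minimal-counterexample computation with the star identities, deduces right nilpotency from it, and then imports left nilpotency of finite $p$-braces from Proposition~4.4 of \cite{CedoSmoktunowiczVendramin19} to conclude central nilpotency. You instead go straight for a nontrivial centre: a minimal subbrace $M$ is an ideal of order $q$, so $(M,\cdot)$ is a normal subgroup of order $q$ of the $q$-group $(A,\cdot)$ and hence central in it, while $\lambda$-invariance together with $|\Aut(M,+)|=q-1$ places $M$ inside $\Fix(A)$; comparing $am=a+\lambda_a(m)=a+m$ with $ma=m+\lambda_m(a)$ then forces $\lambda_m=\mathrm{id}$, so $M\leq \Soc(A)\cap\Fix(A)=\zeta(A)$, and induction through the upper central series (Proposition~\ref{prop:caract_nilpot}, together with the standard identification $\zeta_n(A/\zeta(A))=\zeta_{n+1}(A)/\zeta(A)$, which you should state explicitly) finishes. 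Your route is more elementary: it avoids both the long computation and the external left-nilpotency input. What it does not state is the paper's sharper Proposition~\ref{prop:Dedekind->orderp<Soc} for \emph{arbitrary} subbraces of order $p$ --- though, since in a Dedekind brace every such subbrace is itself an ideal, your normality-plus-$\Fix$ argument applies verbatim to it and would in fact give a shorter proof of that proposition as well.
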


We work towards a proof of Theorem~\ref{teo:Dedekind->central-nilp}.

%\begin{proposicio}
%\label{prop:Ded->ideal-ordrep-totprimer}
%Let $A$ be a finite Dedekind left brace and let $p\in \pi(A)$. Then, there exists an abelian ideal $I_p$ which is isomorphic to a cyclic group of order~$p$.
%\end{proposicio}
%
%\begin{proof}
%Let $A$ be a minimal counterexample. According to Remark~\ref{nota:no-proper-subbrace}, there exists a minimal subbrace $M$ of order $q$ for some prime $q \in \pi(A)$. Thus, $M$ is an abelian ideal of $A$ isomorphic to a cyclic group of order~$q$.
%
%Let $q \neq p \in \pi(A)$. Then, we can consider $\bar{A} = A/M$. By the minimality of $A$, there exists an abelian ideal $I/M$ of order $p$. Therefore, $|I| = pq$, and by Lemma~\ref{lema:descomp-direct-prod}, $I = M \oplus B$ for some subbrace $B$ of $A$. Hence, $B$ is an ideal of order $p$, a contradiction.
%\end{proof}

\begin{proposicio}
\label{prop:ded->descomp-sylow}
Let $A$ be a finite Dedekind left brace. For every $p \in \pi(A)$, the Sylow $p$-subgroup $I_p$ of $(A,+)$ is an ideal of $A$. In particular, $A = \bigoplus_{p\in \pi(A)} I_p$.
\end{proposicio}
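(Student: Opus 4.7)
The plan is to prove that each Sylow $p$-subgroup $I_p$ of $(A,+)$ is a subbrace of $A$; once this is established, the Dedekind hypothesis immediately upgrades it to an ideal. The crucial observation is that, since $(A,+)$ is finite abelian, $I_p$ is the unique Sylow $p$-subgroup of $(A,+)$, hence characteristic and therefore invariant under every automorphism of the additive group.

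Concretely, I would first note that for any $a \in A$ the map $\lambda_a$ lies in $\Aut(A,+)$, and so $\lambda_a(I_p) = I_p$. Given $x, y \in I_p$, the identity $xy = x + \lambda_x(y)$ then places $xy$ in $I_p + I_p = I_p$. For multiplicative inverses, the relation $x \cdot x^{-1} = 0$ rewrites as $x + \lambda_x(x^{-1}) = 0$, giving $x^{-1} = \lambda_x^{-1}(-x) = \lambda_{x^{-1}}(-x) \in \lambda_{x^{-1}}(I_p) = I_p$. Hence $I_p$ is closed under both operations and is a subbrace of $A$. The Dedekind hypothesis now guarantees that $I_p$ is in fact an ideal.

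For the direct sum decomposition, I would combine the classical primary decomposition $(A,+) = \bigoplus_{p \in \pi(A)} I_p$ with the ideal property just established. For distinct primes $p \neq q$, since $I_p$ and $I_q$ are ideals one has $I_p \ast I_q \subseteq I_p \cap I_q = 0$, so $\lambda_x(y) = y$ whenever $x \in I_p$ and $y \in I_q$. Consequently, for such $x,y$ the product $xy = x + y$ coincides with the sum, and a straightforward induction on the number of primary components shows that the multiplication on $A$ splits through the decomposition, yielding $A = \bigoplus_{p \in \pi(A)} I_p$ as a direct sum of braces.

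I do not anticipate any substantial obstacle here: the argument rests entirely on the characteristic nature of Sylow subgroups in finite abelian groups, together with the Dedekind hypothesis. The only point requiring a bit of care is the verification that $\lambda$-invariance of $I_p$ suffices for multiplicative closure, including for inverses, which is handled cleanly by the formula $x^{-1} = \lambda_{x^{-1}}(-x)$.
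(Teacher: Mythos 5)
Your proof is correct and takes essentially the same route as the paper's: $I_p$ is characteristic in the finite abelian group $(A,+)$, hence $\lambda$-invariant, hence a subbrace, and the Dedekind hypothesis upgrades it to an ideal. You simply spell out in more detail the two steps the paper leaves implicit, namely that $\lambda$-invariance of an additive subgroup yields multiplicative closure (including inverses) and that the ideal decomposition of $(A,+)$ splits the brace structure.
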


\begin{proof}
Let $p \in \pi(A)$ and let $(I_p,+)$ be the Sylow $p$-subgroup of $(A,+)$. Since it is a characteristic subgroup, it is $\lambda$-invariant, and therefore, a subbrace of~$A$. Hence, $I_p$ is an ideal as $A$ is Dedekind.
\end{proof}

\begin{proposicio}
\label{prop:Dedekind->orderp<Soc}
Let $A$ be a Dedekind left brace such that both $(A,+)$ and $(A,\cdot)$ are $p$-groups, $p$ a prime. Then every subbrace $S$ of order $p$ is contained in $\zeta(A)$.
%Let $A$ be a finite Dedekind left brace of order $p^k$ for some prime $p$ and positive integer $k$. Then every subbrace $S$ of order $p$ is contained in $\zeta(A)$. 
\end{proposicio}

\begin{proof}
Let $S$ be a subbrace of $A$ of order $p$, $p$ a prime. Then, $S$ is an ideal of $A$. Thus, $(S,\cdot)$ is a minimal normal subgroup of $(A,\cdot)$ and, therefore, $(S,\cdot) \leq \Z(A,\cdot)$.

Following Remark~\ref{nota:semidirect-brace}, take $G_A = [(A,+)]_\lambda(A,\cdot)$ the semidirect product associated with $(A,+)$. Since $S$ is an ideal of $A$, it follows that $\{(s,0)\mid s\in S\}$ is a minimal normal subgroup of $G_A$. Thus, $\{(s,0)\mid s \in S\} \leq \Z(G_A)$. Then, for every $a\in A$,
$(0,0) = [(0,a),(s,0)]= (a \ast s, 0)$, i.e. $a \ast s = 0$ and, therefore, 
\[ \lambda_s(a) = -s + sa = as - s = a + a \ast s = a.\]
Moreover, for every $a\in A$ we also have that $[(a,0),(0,s)] = (0,0)$. Thus, $\{(0,s)\mid s\in S\} \leq \Z(G_A)$. Hence, $S \leq \zeta(A)$ by Remark~\ref{nota:centre-semidirect}.
\end{proof}

\begin{corolari}
\label{cor:dedekind+pbrida->rightnilpotent}
Let $A$ be a finite Dedekind left brace of order $p^k$ for some prime $p$ and positive integer $k$. Then, $A$ is centrally nilpotent.
\end{corolari}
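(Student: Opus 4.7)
The plan is to induct on $k$. The base case $k=1$ is immediate from Remark~\ref{nota:no-proper-subbrace}: a left brace of order $p$ has no proper subbraces and is therefore abelian, and abelian braces are centrally nilpotent since $A \ast A = 0$.

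For $k > 1$, assume the statement for all Dedekind left braces of smaller $p$-power order. The inductive step reduces to exhibiting a nonzero centre. I would pick a minimal subbrace $S$ of $A$, which exists by finiteness. Since $S$ has no proper subbraces, Remark~\ref{nota:no-proper-subbrace} forces $|S|$ to be prime, and Lagrange in $(A,+)$ gives $|S| = p$. Proposition~\ref{prop:Dedekind->orderp<Soc} already places $S$ inside $\Soc(A)$. To upgrade the containment to $\zeta(A) = \Soc(A) \cap \Fix(A)$, I would use that $S$ is an ideal (since $A$ is Dedekind), hence $\lambda$-invariant, and restrict the action to obtain a homomorphism $\bar\lambda\colon (A,\cdot) \to \Aut(S,+)$. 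The image has order dividing both $|A| = p^k$ and $|\Aut(S,+)| = p-1$, which are coprime, so $\bar\lambda$ is trivial. Therefore $a \ast s = \lambda_a(s) - s = 0$ for every $a \in A$ and $s \in S$, which means $S \leq \Fix(A)$. Combining with the proposition gives $S \leq \zeta(A)$, so $\zeta(A) \neq 0$.

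With $\zeta(A) \neq 0$ in hand, the quotient $A/\zeta(A)$ is Dedekind (quotients of Dedekind braces are Dedekind) and its order is a proper divisor of $p^k$. The inductive hypothesis yields that $A/\zeta(A)$ is centrally nilpotent, so its upper central series reaches $A/\zeta(A)$. Using the recursive definition $\zeta_{n+1}(A)/\zeta_n(A) = \zeta(A/\zeta_n(A))$, this series lifts to a terminating upper central series of $A$, and Proposition~\ref{prop:caract_nilpot} concludes that $A$ is centrally nilpotent.

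There is no serious obstacle once Proposition~\ref{prop:Dedekind->orderp<Soc} is available: the argument is the brace analogue of the classical proof that finite $p$-groups have nontrivial centre. The only subtle point is that the proposition alone gives $S \leq \Soc(A)$, which would suffice for right nilpotency via the socle series but not for central nilpotency; the coprimality of $p^k$ with $p-1$ is what buys the extra containment $S \leq \Fix(A)$ and lands $S$ inside the full centre, powering the induction.
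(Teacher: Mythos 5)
Your proof is correct, but it takes a genuinely different route from the paper's. The paper splits central nilpotency into its two halves: Proposition~\ref{prop:Dedekind->orderp<Soc} yields right nilpotency (every Dedekind quotient has nontrivial socle, so the socle series terminates), while left nilpotency is imported from Proposition~4.4 of Ced\'o--Smoktunowicz--Vendramin, which asserts that every finite left brace of prime power order is left nilpotent; the two are then combined via the definition of central nilpotency as left plus right nilpotency. You instead exhibit a nonzero centre directly: a minimal subbrace $S$ has order $p$, Proposition~\ref{prop:Dedekind->orderp<Soc} places it in $\Soc(A)$, and the coprimality of $p^{k}$ with $|\Aut(S,+)|=p-1$ forces the restricted action $\bar{\lambda}$ to be trivial, placing $S$ in $\Fix(A)$ as well, so $S\leq \Soc(A)\cap\Fix(A)=\zeta(A)$; induction on $|A|$ through the upper central series together with Proposition~\ref{prop:caract_nilpot} then closes the argument. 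Your coprimality step is precisely the $\bar{\lambda}$ argument that already appears inside the paper's proof of Proposition~\ref{prop:Dedekind->orderp<Soc}, so your route is self-contained within the paper's own toolkit and avoids the external left-nilpotency citation; the paper's route is shorter once that citation is granted, and it isolates the two nilpotency notions, which is in keeping with how the rest of the paper treats them. Both arguments are valid.
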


\begin{proof}
Let $A$ be a minimal counterexample. If $A$ has not any subbrace $S$ of order $p$, then by Remark~\ref{nota:no-proper-subbrace}, $A$ is trivial isomorphic to cyclic group of order $p$, which is not possible. Then, there exists $S \lneq A$ of order $p$. According to Proposition~\ref{prop:Dedekind->orderp<Soc}, it holds $S \leq \zeta(A)$. Hence, by minimality, $A/\zeta(A)$ is centrally nilpotent and, therefore, $A$ is centrally nilpotent; a contradiction.
\end{proof}

\medskip

\begin{proof}[Proof of Theorem~\ref{teo:Dedekind->central-nilp}]
By Theorem~4.13 in~\cite{BallesterEstebanFerraraPerezCTrombetti-arXiv-cent-nilp}, it follows that a left brace $A$ is centrally nilpotent if, and only if, the Sylow $p$-subgroups $I_p$ of $A$ are centrally nilpotent ideals and $A = \bigoplus_{p\in \pi(A)} I_p$. The result follows from Proposition~\ref{prop:ded->descomp-sylow} and Corollary~\ref{cor:dedekind+pbrida->rightnilpotent}.
\end{proof}

In case of left braces with an elementary abelian additive $p$-group, $p$ a prime, we can say more.

\begin{proposicio}
\label{prop:dedekind->Soc_a=Z_a}
Let $A$ be a Dedekind left brace. If $(A,+)$ is elementary abelian $p$-group, then $\Soc_{n}(A) = \zeta_n(A)$ for every positive integer $n$.
\end{proposicio}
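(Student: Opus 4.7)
The plan is an induction on $n$, using that $\zeta_n(A) \leq \Soc_n(A)$ holds in every left brace. The crux is the case $n = 1$: for the inductive step I would simply observe that the quotient $\bar{A} = A/\zeta_n(A)$ is again a finite Dedekind left brace with elementary abelian additive group---the Dedekind property passes to quotients since preimages of subbraces under $A \to A/I$ are subbraces of $A$, hence ideals, whose images are again ideals; and any quotient of an elementary abelian $p$-group remains elementary abelian. So assuming $\Soc_n(A) = \zeta_n(A)$, applying the $n=1$ case to $\bar{A}$ yields $\Soc(\bar{A}) = \zeta(\bar{A})$, which unwinds to $\Soc_{n+1}(A) = \zeta_{n+1}(A)$.

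For the base case, let $0 \neq a \in \Soc(A)$. Then $a \ast a = 0$, and Lemma~\ref{lema:a·a=0ciclic} yields $\langle a \rangle = \langle a \rangle_+$; since $(A,+)$ is elementary abelian this is cyclic of order $p$. The Dedekind hypothesis makes $\langle a \rangle$ an ideal, so restricting the $\lambda$-action gives a group homomorphism $\bar{\lambda}\colon (A,\cdot) \to \Aut(\langle a \rangle, +) \cong \C_{p-1}$. Because $|A|$ is a power of $p$ we have $\gcd(|A|, p-1) = 1$, so $\bar{\lambda}$ is trivial; equivalently, $b \ast a = \lambda_b(a) - a = 0$ for every $b \in A$. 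Thus $a \in \Ann_A^r(A)$, and together with $a \in \Ann_A^l(A)$ this gives $a \in \zeta(A)$, proving $\Soc(A) \subseteq \zeta(A)$ and hence $\Soc(A) = \zeta(A)$.

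The main conceptual ingredient is precisely the coprimality trick already present in the proof of Proposition~\ref{prop:Dedekind->orderp<Soc}, applied to cyclic subbraces of order $p$ inside the socle, so I do not anticipate a real obstacle. The only point requiring some care is checking that the two hypotheses on $A$ are inherited by quotients by ideals, but this is immediate from the respective definitions.
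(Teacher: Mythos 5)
Your proof is correct and follows essentially the same route as the paper: both reduce to the case $n=1$, where the Dedekind hypothesis forces the $\lambda$-action on (subgroups of) the socle to be trivial because $(A,\cdot)$ is a $p$-group while the relevant automorphism group has order dividing $p-1$, and then conclude by induction on quotients. The only difference is that the paper restricts $\lambda$ to the whole socle at once and cites Schmidt's theorem on power automorphisms of elementary abelian $p$-groups, whereas you restrict to a single cyclic ideal $\langle a \rangle$ of order $p$, which makes the coprimality step self-contained.
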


\begin{proof}
By induction, it suffices to show that $\Soc(A) = \zeta(A)$. But this follows as a consequence of Proposition~\ref{prop:Dedekind->orderp<Soc}, as every cyclic subgroup of $\Soc(A)$ is a subbrace, and therefore, it is contained in $\zeta(A)$.
%Clearly $\zeta(A) = \zeta_1(A) \leq S := \Soc_1(A) = \Soc(A)$. Since $S$ is an ideal, by Lemma~\ref{lema:left-right-ann_substruct}, $\Ann_A^l(S)$ is the kernel of the restriction of the $\lambda$-action 
%\[\bar{\lambda}\colon a \in (A,\cdot) \rightarrow \left.\lambda_a\right|_S \in \Aut(S,+), \]
%i.e. $\Ker \bar{\lambda} = \Ann_A^l(S) \unlhd (A,\cdot)$. Moreover, $\big(A/ \Ann_A^l(S), \bigcdot\big)$ is isomorphic to a subgroup of
%\[ F = \{ \varphi\in \Aut(S,+) \mid \varphi(H) = H, \, \text{for every $H \leq (S,+)$}\} \leq \Aut(S,+)\]
%Since the exponent of $S$ is $p$, Theorem 1.5.6 of \cite{Schmidt94} shows that $F \cong \Aut(\C_p)$. Thus, $A = \Ann_A^l(S)$, i.e. $as = a+s = s+a = sa$, for every $a \in A$ and every $s\in S$. Hence, $\zeta(A) = S$. Using similar arguments and induction, we can conclude $\Soc_n(A) = \zeta_n(A)$, for every positive integer $n$.
\end{proof}

% \begin{nota}
% \label{nota:hypersoc}
% The socle series admits also an extension by means of ordinals. Let $A$ be a left brace. We can consider the \emph{ordinal socle series}
% \[ 0 = \Soc_0(A) \leq \Soc_1(A) \leq \ldots \Soc_{\alpha}(A) \leq \Soc_{\alpha+1}(A) \leq \ldots \leq \Soc_{\gamma}(A)\]
% an ascending series of ideals, defined by the recursive rule
% \[ \begin{array}{l}
%  \Soc_{\alpha+1}(A)/\Soc_\alpha(A) = \Soc(A/\Soc_\alpha(A))\ \text{for all ordinals $\alpha$}\\
% \Soc_{\lambda}(A)  = \bigcup_{\mu < \lambda} \Soc_\mu(A) \ \text{for the limit ordinals $\lambda$.}\end{array}\]
% The last term $\Soc_\infty(A) = \Soc_\gamma(A)$ of this series is called the \emph{upper hypersocle of $A$}. A left brace is said to be \emph{hypermultipermutational} if it coincides with its upper hypersocle.
% \end{nota}

% \begin{corolari}
% \label{cor:dedekind-inf->Soc_alpha=Z_alpha}
% Let $A$ be a Dedekind hypermultipermutational left brace. If $(A,+)$ is elementary abelian, then $\Soc_{\alpha}(A) = \zeta_{\alpha}(A)$ for every ordinal $\alpha$. In particular, $A$ is hypercentral.
% \end{corolari}

% \begin{proof}
% By the definition of the ordinal socle series of $A$, each quotient is an abelian left brace. Thus, each factor is isomorphic to an abelian $p$-group. Therefore, it follows that $(A,\cdot)$ is also a $p$-group. Then, we can argue as in Proposition~\ref{prop:dedekind->Soc_a=Z_a} using transfinite induction.
% \end{proof}

\subsection*{Extraspecial left braces}
\label{subsec:extraspecial}

We introduce here a family of Dedekind left braces. These will play a key role in Section~\ref{sec:dedekind_elem-abel}. The following definition of extraspecial left braces is motivated by the corresponding one in groups.  

\begin{definicio}
A non-abelian left brace $E$ is called \emph{extraspecial} if its additive group is an elementary abelian $p$-group, $p$ a prime, and there exists $c\in \zeta(E)$ such that $0 \neq E/\langle c \rangle_+$ is an abelian left brace. An extraspecial brace $E$ is called \emph{strong extraspecial} if $a \ast a \neq 0$ for every element $a\notin \langle c \rangle_+$.
\end{definicio}

Extraspecial left braces form a subfamily of important families of braces with a great impact on the structural theory of braces and solutions of the Yang-Baxter equation. That is the case of almost polycyclic braces and supersoluble braces (see, respectively, \cite{BallesterEstebanFerraraPerezCTrombetti-polycyclicbraces} and~\cite{BallesterEstebanFerraraPerezCTrombetti24}).

\begin{nota}
\label{nota:extrasp-e.v}
Let $E$ be an extraspecial brace such that $(E,+)$ is an elementary abelian $p$-group, for some $p$ prime. For every element $c \in \zeta(E)$, the additive subgroup $C:= \langle c\rangle_+$ is an abelian ideal which is isomorphic to a cyclic group of order~$p$. Then, $C$ can be embedded in the field of $p$-elements $\F_p$ as $kc \mapsto k$, for every $0 \leq k < p$. Moreover, $E/C$ can also be regarded as a vector space over the field $\F_p$. \emph{From now on $\F_p$ denotes the field of $p$-elements with $p$ a prime}. 
\end{nota}

\begin{nota}
\label{nota:extrasp-socle=centre}
If $E$ is an extraspecial left brace, $E \neq \Soc(E)$, and then $\Soc_2(E) = E$, since $a \ast b \in \langle c \rangle_+ \leq \Soc(E)$ for every $a,b\in E$. Thus, $E\ast E = \langle c \rangle_+$. Moreover, if $E$ is strong extraspecial, $\zeta(E) = \Soc(E)$, as $a \notin \zeta(E)$ implies that $a \ast a \neq 0$. Therefore, $\Ann_E(\Soc(E)) = E$.
\end{nota}

It follows that (strong) extraspecial left braces are characterised by the existence of (strong non-degenerate) bilinear forms.

\begin{teorema}
\label{teo:(strong)extras-(strong)bilinear}
Let $E$ be a (strong) extraspecial left brace and let $C:= \langle c\rangle_+ \leq \zeta(E)$ such that $E/C$ can be regarded as a vector space over $\F_p$ for some prime~$p$. The map $\phi \colon  E/C \times E/C  \rightarrow \F_p$ defined as:
\[ (xC, yC)  \mapsto  k_{x,y}, \ \text{where $x\ast y = k_{x,y}c$,}\ \text{for every $xC,yC \in E/C$},  \]
is a (strong non-degenerate) bilinear form. 

Conversely, let $V$ be a vector space over $\F_p$, let $C = \langle c \rangle_+$ be an additive cyclic group of order $p$ and let $\phi$ be a (strong non-degenerate) bilinear form on $V$. The group $A = (V,+) \oplus C$ endowed with the product
\[ (x,k)(y,t) := (x + y, k+t+\phi(x,y)c), \quad \text{for every $x,y\in V$, $k,t\in C$}\]
turns out to be a (strong) extraspecial left brace.
\end{teorema}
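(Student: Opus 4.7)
The theorem asserts an equivalence between (strong) extraspecial left braces and (strong non-degenerate) bilinear forms on $\F_p$-vector spaces; I would verify it in both directions, keeping the ``strong'' qualifier in parallel throughout.

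For the forward direction, the first task is to check that $\phi$ is well defined. Since $E/C$ is abelian we have $E\ast E\subseteq C$, so the scalar $k_{x,y}\in\F_p$ exists. Independence of the coset representatives reduces to showing $(x+v)\ast y=x\ast y$ and $x\ast(y+w)=x\ast y$ whenever $v,w\in C$. The second equality is immediate from~\eqref{ast_distesq}, since $C\leq \Fix(E)$ gives $x\ast w=0$. For the first, I would use that $v\in\Fix(E)$ yields $xv=x+v$ (so as elements of $E$, $x+v=xv$) and then apply~\eqref{ast_prod}; the terms $v\ast y$ and $x\ast(v\ast y)$ both vanish because $v\in\Soc(E)$. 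To prove bilinearity, linearity in the second slot is exactly~\eqref{ast_distesq}. Linearity in the first slot is the only substantive point: using $x\ast y\in\zeta(E)\subseteq\Fix(E)$ one sees that $xy=(x+y)(x\ast y)$, and then applying~\eqref{ast_prod} to both factorisations of $(xy)\ast z$---while invoking $y\ast z\in\Fix(E)$ and $x\ast y\in\Soc(E)$ to kill the crossed terms---yields
\[(x+y)\ast z\;=\;(xy)\ast z\;=\;x\ast z+y\ast z.\]
Finally, if $E$ is strong extraspecial, then $a\ast a\neq0$ for every $a\notin C$ translates into $\phi(\bar a,\bar a)\neq0$ for every $\bar a\neq 0$; any non-zero vector in a left or right radical of $\phi\vert_U$ (for $U\leq E/C$ non-zero) would contradict this, so $\phi$ is strong non-degenerate on $U$.

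For the converse, I would first verify that the prescribed product turns $A=V\oplus C$ into a group: associativity is a direct expansion in which the bilinearity of $\phi$ is precisely what is required, the identity is $(0,0)$, and the inverse of $(x,k)$ is $(-x,-k+\phi(x,x)c)$ (using $\phi(x,-x)=-\phi(x,x)$). The brace identity $a(b+c)=ab+ac-a$ then unfolds to an equality that holds by linearity of $\phi$ in the second variable. For extraspeciality: $(A,+)$ is an elementary abelian $p$-group by construction; the element $e=(0,c)$ lies in $\zeta(A)$ because $\phi(0,-)=\phi(-,0)=0$; the induced multiplication on $A/\langle e\rangle_+\cong V$ coincides with addition, so this quotient is an abelian left brace; and $A$ is non-abelian as soon as $\phi\neq 0$. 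For the strong case, a direct computation gives $a\ast a=(0,\phi(x,x)c)$ for $a=(x,k)$, so $a\ast a\neq 0$ for every $a\notin\langle e\rangle_+$ is equivalent to $\phi(x,x)\neq 0$ for every $0\neq x\in V$, which in turn is equivalent to strong non-degeneracy of $\phi$.

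The main obstacle is left linearity $(x+y)\ast z=x\ast z+y\ast z$ for $\phi$, since only the left-distributive identity~\eqref{ast_distesq} is valid in a general brace. The trick of factoring $xy=(x+y)(x\ast y)$ with $x\ast y\in\zeta(E)$ and then applying~\eqref{ast_prod} twice---with the crossed terms collapsing thanks to $C\leq\Soc(E)\cap\Fix(E)$---is what makes this step go through; once that identity is in hand, everything else reduces to routine bookkeeping.
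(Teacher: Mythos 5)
Your proof is correct and follows essentially the same route as the paper's: well-definedness and linearity in the second slot come from \eqref{ast_distesq} together with $C\leq\zeta(E)$, and linearity in the first slot from rewriting the sum as a product with a central correction (you use $xy=(x+y)(x\ast y)$ where the paper uses $x+z=xzc$ for some $c\in C$) before applying \eqref{ast_prod} and killing the crossed terms. You are in fact slightly more thorough than the paper on the converse, where you also verify the group axioms and the extraspecial/strong properties of the constructed brace explicitly.
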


\begin{proof}
Recall that Remark~\ref{nota:extrasp-socle=centre} states that $x \ast y = k_{x,y}c \in C$, for every $x,y\in E$. First, we check that $\phi$ is well-defined. Let $x_1C, x_2C, y_1C,y_2C \in E/C$ such that $x_1C = x_2C$ and $y_1C = y_2C = y_2 + C$. Then, $x_2 = x_1c$, $y_2 = y_1+c'$, for some elements $c, c'\in C$. Since $C \leq \zeta(E)$, it holds
\begin{align*}
 x_2\ast y_2 & = (x_1c) \ast (y_1+c') = (x_1c) \ast y_1 + (x_1c) \ast c' = (x_1c) \ast y_1 = \\
& = x_1\ast (c\ast y_1) +  c\ast y_1 + x_1\ast y_1  = x_1\ast y_1.
\end{align*}

Now, we see that $\phi$ is bilinear. Given $xC, yC, zC \in E/C$, we have that $xC+zC = (x+z)C = xzC$, as $E/C$ is abelian. Thus, $x+z = xzc$, for some element $c\in C$. Applying that $z\ast y \in \zeta(E)$, it holds
\begin{align*}
k_{x+z,y}c & = (x+z)\ast y = (xzc)\ast y = (xz) \ast (c\ast y) + c \ast y + (xz) \ast y  = \\
& = (xz) \ast y = x \ast (z\ast y)  + z \ast y + x\ast y  = x\ast y + z \ast y = k_{x,y}c +
 k_{z,y}c \\
 & =  (k_{x,y}+k_{z,y})c.
 \end{align*}
Similarly,
\[ k_{x,y+z}c = x \ast (y+z) = x \ast y + x \ast z = k_{x,y}c + k_{x,z} c = (k_{x,y}+k_{x,z})c.\]
Moreover, if $E$ is strong extraspecial, then $x \ast x \neq 0$ for every $x \notin C$, which means $\phi(x+C, x+C) \neq 0$. Thus, $\phi$ is strong non-degenerate.

Conversely, assume that $(A,+) = (V,+) \oplus C$, where $(V,+)$ is an elementary abelian $p$-group and $C = \langle c \rangle_+$ is a cyclic group of order $p$, and the product
\[ (x,k)(y,t) := (x + y, k+t+\phi(x,y)c), \quad \text{for every $x,y\in V$, $k,t\in C$}\]
is defined for some bilinear form $\phi\colon V \times V \rightarrow \F_p$.

Only the associative law of the product and the left brace distributivity law are in doubt. Let $(x,k), (y,t),(z,s) \in A$. Then
\begin{align*}
\big((x,k)(y,t)\big)\big(z,s\big) & = \big(x+y, k+t+\phi(x,y)c\big)\big(z,s\big) = \\
& = \big(x + y + z, k + t + \phi(x,y)c + s + \phi(x+y,z)c\big) = \\
& = \big(x+y+z, k+t+s + \phi(x,y)c + \phi(x,z)c + \phi(y,z)c\big);\\
\big(x,k\big)\big((y,t)(z,s)\big) & = \big(x, k\big)\big(y+z, t+s+\phi(y,z)c\big) = \\
& = \big(x+y+z, k+t+s+\phi(y,z)c + \phi(x,y+z)c \big) = \\
& = \big(x+y+z, k+t+s+\phi(y,z)c + \phi(x,y)c + \phi(x,z)c\big);
\end{align*}
i.e. the product is associative.

Furthermore,
\begin{align*}
 \big(x,k\big)\big((y,t)+(z,s)\big) & = (x,k)(y+z,t+s) = \\ 
& = \big(x+y+z, k+t+s + \phi(x,y+z)c\big) = \\
& = \big(x+y+z, k+t+s + \phi(x,y)c + \phi(x,z)c\big);
\end{align*}
and
\begin{align*}
& (x,k)(y,t) + (x,k)(z,s) - (x,k) = \\
& = \big(x+y, k+t+\phi(x,y)c\big) + \big(x+z,k+s+\phi(x,z)c\big) - (x,k) = \\
& = \big(x+y+z, k+t+s + \phi(x,y)c + \phi(x,z)c\big).
\end{align*}
Hence, $(A,+,\cdot)$ is a left brace.
\end{proof}

The following example provides three families of finite extraspecial left braces.

\begin{exemple}
\label{ex:extrasp}
Let $p$ be a prime and $\mathbb{F}_p$ the field of $p$ elements. We define the following sets with their corresponding addition and product:
\begin{enumerate}
\item Let $\E_0(m,p) = \mathbb{F}_p\oplus \mathbb{F}_p$, with $0 < m < p$. For every $k_1,k_2,t_1,t_2\in \mathbb{F}_p$, we define
\begin{align*}
 (k_1,k_2) + (t_1,t_2) & := (k_1+t_1, k_2+t_2),\\
 (k_1,k_2)(t_1,t_2) & := (k_1+t_1, k_2+t_2+ mk_1t_1)
\end{align*}
By Theorem~\ref{teo:(strong)extras-(strong)bilinear}, $\E_0(m,p)$ is a strong extraspecial left brace, since the map $\phi\colon \F_p \times \F_p \rightarrow \F_p$, given by $\phi(k,t) = mkt$ for every $k,t\in \F_p$, is a strong non-degenerate bilinear form. Moreover, given $(k_1,k_2), (t_1, t_2) \in \E_0(m,p)$, it holds
\[ (k_1,k_2) \ast (t_1, t_2) = (0, mk_1t_1)\]
Therefore, $\zeta(\E_0(m,p)) = \langle (0,1) \rangle_+$.

\item Let $\E_1(m,p) = \mathbb{F}_p\oplus \mathbb{F}_p\oplus \mathbb{F}_p$, with $0 \leq m < p$. For every $k_1,k_2,k_3, t_1,t_2,t_3\in \mathbb{F}_p$, we define
\begin{align*}
(k_1,k_2,k_3)+(t_1,t_2,t_3) & := (k_1+t_1,k_2+t_2,k_3+t_3) \\
(k_1,k_2,k_3)(t_1,t_2,t_3) & := (k_1+t_1, k_2+t_2,k_3+t_3+ mk_1t_1+k_2t_2)
\end{align*}
By Theorem~\ref{teo:(strong)extras-(strong)bilinear}, $\E_1(m,p)$ is an extraspecial left brace, since the map $\phi\colon \F_p^2 \times \F_p^2 \rightarrow \F_p$, given by 
\[ \phi((k_1,k_2),(t_1,t_2)) = mk_1t_1 + k_2t_2\] is a bilinear form. Moreover, it holds
\[ (k_1,k_2,k_3) \ast (t_1, t_2,t_3) = (0, 0, mk_1t_1 + k_2t_2)\]
If $m = 0$, $\zeta(\E_1(m,p)) = \langle (1,0,0),(0,0,1)\rangle_+$. Otherwise, $\zeta(\E_1(m,p)) = \langle (0,0,1) \rangle_+$. Moreover, $a \ast a \neq 0$ for every $a = (k_1,k_2,k_3) \notin \zeta(\E_1(m,p))$ if, and only if, $m \neq 0$ and $mk_1^2 + k_2^2 \neq 0$ for every $k_1,k_2 \in \mathbb{F}_p$ not both zero. Of course, if $k_2 \neq 0$ this is equivalent to say that the polynomial $mX^2 + 1$ has no roots in $\mathbb{F}_p$; while if $k_2 = 0$, then $mk_1^2 = 0$ and $m \neq 0$ imply that $k_1 = k_2 = 0$, which is not possible. Hence, $\E_1(m,p)$ is strong if, and only if, $m\neq 0$ and the polynomial $mX^2 + 1$ has no roots in $\F_p$.

\item Let $\E_2(m,p) = \mathbb{F}_p\oplus \mathbb{F}_p\oplus \mathbb{F}_p$, with $0 \leq m < p$. For every $k_1,k_2,k_3, t_1,t_2, t_3\in \mathbb{F}_p$, we define
\begin{align*}
&(k_1,k_2,k_3)+(t_1,t_2,t_3):= (k_1+t_1,k_2+t_2,k_3+t_3) \\
&(k_1,k_2,k_3)(t_1,t_2,t_3):= (k_1+t_1, k_2+t_2,k_3+t_3+ mk_1t_1+ k_1t_2 + k_2t_2)
\end{align*}
By Theorem~\ref{teo:(strong)extras-(strong)bilinear}, $\E_2(m,p)$ is an extraspecial left brace, since the map $\phi\colon \F_p^2 \times \F_p^2 \rightarrow \F_p$, given by 
\[ \phi((k_1,k_2), (t_1,t_2)) = mk_1t_1 + k_1t_2 + k_2t_2\] is a bilinear form. Moreover, it holds
\[ (k_1,k_2,k_3) \ast (t_1, t_2,t_3) = (0, 0, mk_1t_1 +k_1t_2+ k_2t_2)\]
Therefore, $\zeta(\E_2(m,p)) = \langle (0,0,1) \rangle_+$. Moreover, $a \ast a \neq 0$ for every $a = (k_1,k_2,k_3) \notin \zeta(\E_2(m,p))$ if, and only if, $m \neq 0$ and $mk_1^2 + k_1k_2 + k_2^2 \neq 0$ for every $k_1,k_2 \in \mathbb{F}_p$ not both zero. Of course, if $k_2 \neq 0$ this is equivalent to say that the polynomial $mX^2 +X + 1$ has no roots in $\mathbb{F}_p$; while if $k_2 = 0$, then $mk_1^2 = 0$ and $m \neq 0$ imply that $k_1 = k_2 = 0$, which is not possible. Hence, $\E_2(m,p)$ is strong if, and only if, $m\neq 0$ and the polynomial $mX^2 + X + 1$ has no roots in $\F_p$.
\end{enumerate}
\end{exemple}

Indeed these are the unique families of finite strong extraspecial left braces.

\begin{teorema}
\label{teo:classif-strong}
Let $E$ be a finite strong extraspecial left brace. Then, $E$ is isomorphic to either $\E_0(m,p)$, $\E_1(m,p)$ or $\E_2(m,p)$ for some $p$ prime and some $0 < m < p$.
\end{teorema}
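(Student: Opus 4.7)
The plan is to view a finite strong extraspecial left brace $E$ through the bilinear-form dictionary of Theorem~\ref{teo:(strong)extras-(strong)bilinear}. Fixing a generator $c$ of $C := \langle c\rangle_+ \leq \zeta(E)$ and an additive section $V \to E$ of the projection $E \to V := E/C$ (which exists since $(E,+)$ is elementary abelian), the brace $E$ is determined by the strong non-degenerate bilinear form $\phi \colon V \times V \to \F_p$ encoding its star operation. A change of basis of $V$ via $P \in \Aut(V,+)$ transforms the matrix $M$ of $\phi$ into $P^{\mathrm{T}} M P$, while replacing $c$ by $\mu c$ (with $\mu \in \F_p\setminus\{0\}$) transforms $M$ into $\mu^{-1} M$; both operations produce braces isomorphic to $E$. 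The classification thus reduces to bringing $M$ into one of three canonical forms under these transformations.

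First I would bound $\dim_{\F_p} V$. Strong non-degeneracy of $\phi$ forces $\phi(x,x) \neq 0$ for every $0 \neq x \in V$, since otherwise $\phi$ would restrict to the zero form on $\langle x\rangle_+$, which is degenerate. Corollary~\ref{cor:no-isotrop->dim<=2} then yields $\dim_{\F_p} V \leq 2$, and $E$ being non-abelian forces $V \neq 0$. If $\dim V = 1$, then $\phi$ is given by a single non-zero scalar $m \in \F_p \setminus\{0\}$ (representable by some $0 < m < p$), and comparing the reconstruction in Theorem~\ref{teo:(strong)extras-(strong)bilinear} with Example~\ref{ex:extrasp}(1) identifies $E$ with $\E_0(m,p)$.

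If $\dim V = 2$, Corollary~\ref{cor:2.12phi(x,x)neq0_triangularmatrix} provides a basis $\{v_1, v_2\}$ in which the matrix of $\phi$ is upper triangular, with entries $a := \phi(v_1,v_1)$, $b := \phi(v_1,v_2)$, $d := \phi(v_2,v_2)$, and the strong property forces $a, d \neq 0$. If $b = 0$, rescaling $c$ by the factor $d$ normalises $M$ to the diagonal matrix with diagonal $(a/d,\,1)$, matching the defining form of $\E_1(a/d, p)$ in Example~\ref{ex:extrasp}(2). If $b \neq 0$, replacing $v_2$ by $(b/d)\,v_2$ equalises the $(1,2)$ and $(2,2)$ entries to the common value $b^2/d$, after which rescaling $c$ by $b^2/d$ brings $M$ to the upper triangular matrix with rows $(ad/b^2,\,1)$ and $(0,\,1)$, identifying $E$ with $\E_2(ad/b^2, p)$ in Example~\ref{ex:extrasp}(3). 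The strong property is preserved throughout and amounts, on the normal forms, precisely to the irreducibility conditions on $mX^2 + 1$ and $mX^2 + X + 1$ recorded there.

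The main obstacle is the bookkeeping underlying the equivalence $M \sim \mu^{-1} P^{\mathrm{T}} M P$: one must verify that a change of additive basis of $V$ together with a rescaling of $c$ implements a genuine brace isomorphism from $E$ to the $\E_i(m,p)$ constructed from the transformed form via the converse part of Theorem~\ref{teo:(strong)extras-(strong)bilinear}. Once this dictionary is in place, the argument above reduces to the elementary linear algebra sketched.
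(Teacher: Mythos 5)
Your proposal is correct and follows essentially the same route as the paper: translate $E$ into a strong non-degenerate bilinear form via Theorem~\ref{teo:(strong)extras-(strong)bilinear}, bound $\dim_{\F_p}(E/C)\leq 2$ using Corollary~\ref{cor:no-isotrop->dim<=2}, and then triangularise and normalise the form (by rescaling basis vectors and the generator $c$) to reach the matrices of $\E_0$, $\E_1$, $\E_2$. The only cosmetic difference is the order of normalisation (the paper first fixes $b\ast b=c$ and then rescales $a$ to make the off-diagonal entry $0$ or $1$, while you rescale $v_2$ and then $c$), and the ``dictionary'' step you flag as the main obstacle is exactly what the converse direction of Theorem~\ref{teo:(strong)extras-(strong)bilinear} supplies.
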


\begin{proof}
From Theorem~\ref{teo:(strong)extras-(strong)bilinear} and Corollary~\ref{cor:no-isotrop->dim<=2} it follows that $E$ has order at most $p^3$ for some $p$ prime. Moreover, $|E| = p$ is not possible, since $E$ would be an abelian left brace.

Assume that $|E| = p^2$. Then, $\zeta(E) = \langle c \rangle_+$ for some $c \in E$, and $(E,+) = \langle b \rangle_+ \oplus \langle c \rangle_+$, for some $b\in E\setminus \zeta(E)$. Since $E$ is strong, $0 \neq b \ast b \in \zeta(E)$. Write $b \ast b = mc$ with $0 < m < p$. Thus, for every $x = k_1b+ k_2c, y = t_1b +t_2c$, $x \ast y = (mk_1t_1)c$. By Theorem~\ref{teo:(strong)extras-(strong)bilinear}, $E$ is isomorphic to $\E_0(m,p)$.

Assume that $|E| = p^3$. Let $C = \langle c \rangle_+ \leq \zeta(E)$ such that $E/C$ is abelian. Since $E$ is strong, without loss of generality, we can assume that there exists $b\in E \setminus C$ such that $b \ast b = c$. %(otherwise, we would take $\bar{c} = b \ast b = \lambda c \neq 0$ as a generator of $C$). 
By Theorem~\ref{teo:(strong)extras-(strong)bilinear}, $E/C$ can be regarded as a vector space so that
\[ (xC, yC) \mapsto k_{x,y}, \quad \text{where $x \ast y = k_{x,y}c$,}\]
defines a strong non-degenerate bilinear form $\phi\colon E/C\times E/C \rightarrow \F_p$. 

According to Corollary~\ref{cor:2.12phi(x,x)neq0_triangularmatrix}, we can find $a \notin C$ such that $E/C = \langle aC\rangle_+ \oplus \langle bC \rangle_+$, and $0 \neq \phi(aC,aC) = m_1$, $\phi(aC,bC) = m_2$, $\phi(bC,bC) = 1$ and $\phi(bC,aC) = 0$, with $m_1,m_2 \in \F_p$. If $m_2\neq 0$, without loss of generality, we can assume that $m_2 = 1$. %(otherwise, we could take $\bar{a} = m_2^{-1}a$). 
Hence, we can write $E/C = \langle aC \rangle_+ \oplus \langle bC \rangle_+$ so that $a \ast a = mc$, $a\ast b = \varepsilon c$ and $b \ast b = c$, with $0\neq m\in \F_p$ and $\varepsilon \in \{0,1\}$.

Therefore, for every $x = k_1a + k_2b + k_3c, y=t_1a +t_2b + t_3c \in E\setminus C$, with $k_i, t_i \in \F_p$, $1\leq i \leq 3$,
\[ x \ast y = (mk_1t_1 + \varepsilon k_1t_2 + k_2t_2)c\]
Hence, $E$ is either isomorphic to $\E_1(m,p)$ or $\E_2(m,p)$ by whether $\varepsilon = 1$ or~not.
\end{proof}

\section{Dedekind left braces whose additive group is elementary abelian $p$-group}
\label{sec:dedekind_elem-abel}

In this section we present a structural theorem for Dedekind left braces whose additive group is an elementary abelian $p$-group, $p$ a prime. 

The following theorem provides a sufficient condition in terms of strong extraspecial left braces.

\begin{teorema}
\label{teo:descomp-strong->Dedekind}
Let $A$ be a left brace whose additive group is  an elementary abelian $p$-group for some prime $p$. Suppose that $A = E \oplus Z$ where $E$ is a strong extraspecial brace and $Z \leq \zeta(A)$. Then, $A$ is a Dedekind left brace.
\end{teorema}

\begin{proof}
Let $c\in \zeta(E)$ such that the quotient  $E/\langle c \rangle_+$ is abelian. By previous Remark~\ref{nota:extrasp-socle=centre}, $\zeta(A) = \zeta(E)\oplus Z = \Soc(A)$, and $\Soc_2(A) = A = \Ann_A(\Soc(A))$. Let $a,a' \in A$ and $e,e'\in E$, $z,z'\in Z$ such that $a = e+z$ and $a' = e'+z'$. It holds that
\[ a\ast a' = (e+z)\ast e' + (e+z)\ast z' = (e+z) \ast e'= (ez) \ast e' = e\ast e' \in \langle c \rangle_+\]
Thus, $A \ast A = E \ast E = \langle c \rangle_+$ and $A/\langle c \rangle_+$ is abelian. 

Let $a \in A$. If $a \in \zeta(A)$, then $\langle a \rangle = \langle a \rangle_+$ is an ideal of $A$. If $a \notin \zeta(A)$, then $\langle c \rangle_+ = \langle a \ast a \rangle_+ \leq \langle a \rangle$. Since $A/ \langle c \rangle_+$ is abelian, $\langle a \rangle/\langle c \rangle_+$ is an ideal of $A/\langle c \rangle_+$ and so, $\langle a \rangle$ is an ideal of $A$. Hence, every subbrace of $A$ is ideal. 
\end{proof}

\begin{corolari}
\label{cor:E-ded<->arrels-pol}
$\E_0(m,p)$ is a Dedekind left brace for every prime $p$ and $0 < m < p$. Moreover, $\E_1(m,p)$ and $\E_2(m,p)$ are Dedekind if, and only if, $m \neq 0$ and the polynomials $mX^2 +1$ and $mX^2 + X + 1$ have respectively no roots in~$\mathbb{F}_p$. 
\end{corolari}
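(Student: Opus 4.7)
The plan is to treat each of the three families from Example~\ref{ex:extrasp} separately, using Lemma~\ref{lema:a·a=0ciclic} together with the bilinear-form dictionary of Theorem~\ref{teo:(strong)extras-(strong)bilinear}. The key observation is that in an extraspecial brace $E$ with $\zeta(E)=\langle c\rangle_+$, a one-dimensional additive subgroup $\langle a\rangle_+$ is a subbrace iff $a\ast a=0$ (otherwise $a\ast a\in\langle c\rangle_+$ has trivial intersection with $\langle a\rangle_+$ whenever $a\notin\langle c\rangle_+$), and such a subbrace is an ideal iff $a\in\langle c\rangle_+$. Since under Theorem~\ref{teo:(strong)extras-(strong)bilinear} the $\ast$-square corresponds to the quadratic form $xC\mapsto\phi(xC,xC)$ on $E/\langle c\rangle_+$, whether $E$ fails to be Dedekind amounts to whether $\phi$ admits a nonzero isotropic vector. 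For $\E_0(m,p)$ of order $p^2$, every proper nontrivial subbrace has order $p$, and since $\E_0(m,p)$ is strong by Example~\ref{ex:extrasp}(1), the only such subbrace is the center $\langle(0,1)\rangle_+$, which is an ideal.

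For $\E_1(m,p)$ and $\E_2(m,p)$ I would prove Dedekind iff strong, then invoke Example~\ref{ex:extrasp}(2)--(3) to rewrite ``strong'' as the stated polynomial condition. Assume first that $E$ is strong, so $\phi$ is strong non-degenerate. Then the only one-dimensional subbrace is $\langle c\rangle_+$, which is central and an ideal. A two-dimensional subbrace $V=\ker f$ satisfies $V\ast V\subseteq V\cap\langle c\rangle_+$; either $\langle c\rangle_+\subseteq V$ or $V\cap\langle c\rangle_+=0$, and in the latter case $V$ maps isomorphically onto $E/\langle c\rangle_+$, so the vanishing of $V\ast V$ would force $\phi$ to vanish on the whole quotient, contradicting strong non-degeneracy. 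Hence $\langle c\rangle_+\subseteq V$, and $E\ast V,\ V\ast E\subseteq\langle c\rangle_+\subseteq V$ show $V$ is an ideal.

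Conversely, if the associated polynomial has a root $\alpha\in\mathbb{F}_p$, I exhibit an isotropic vector outside $\langle c\rangle_+$. In $\E_1(m,p)$ the element $a=(1,m\alpha,0)$ satisfies $a\ast a=(0,0,m(1+m\alpha^2))=0$; in $\E_2(m,p)$ the element $a=(\alpha,1,0)$ satisfies $a\ast a=(0,0,m\alpha^2+\alpha+1)=0$. Lemma~\ref{lema:a·a=0ciclic} then makes $\langle a\rangle=\langle a\rangle_+$ a subbrace. Taking $x=(1,0,0)$ one verifies that $x\ast a$ is a nonzero element of $\langle c\rangle_+$; since $a\notin\langle c\rangle_+$ forces $\langle a\rangle_+\cap\langle c\rangle_+=0$, that element lies outside $\langle a\rangle$, so $\langle a\rangle$ is not an ideal and $E$ is not Dedekind. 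The delicate step is the dim-2 argument in the strong direction: it succeeds precisely because $\dim E/\langle c\rangle_+=2$ in the order-$p^3$ setting, so a two-dimensional subspace $V$ with $V\cap\langle c\rangle_+=0$ must surject onto the full quotient, turning $V\ast V=0$ into a statement about all of $\phi$.
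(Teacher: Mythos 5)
Your argument is correct where it applies, and its sufficiency half takes a genuinely different route from the paper. The paper disposes of ``strong $\Rightarrow$ Dedekind'' by invoking Theorem~\ref{teo:descomp-strong->Dedekind} (with $Z=0$), a result proved later for arbitrary sums $E\oplus Z$; you instead reprove it for these specific braces by classifying subbraces dimension by dimension through the dictionary of Theorem~\ref{teo:(strong)extras-(strong)bilinear}: strongness kills every one-dimensional subbrace other than $\langle c\rangle_+$, and a two-dimensional subbrace avoiding $c$ would force $\phi$ to vanish on all of $E/\langle c\rangle_+$. This is more self-contained (no forward reference to Section~\ref{sec:dedekind_elem-abel}) but is tied to $\dim E/\langle c\rangle_+\le 2$, whereas the paper's theorem also handles central direct summands for free. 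Your necessity argument is essentially the paper's: a root $\alpha$ gives an isotropic $a\notin\langle c\rangle_+$, Lemma~\ref{lema:a·a=0ciclic} makes $\langle a\rangle=\langle a\rangle_+$ a subbrace, and an explicit star product escapes it. Your witnesses differ slightly (you test $x\ast a$ with $x=(1,0,0)$ rather than $a\ast y$ as the paper does), and the claim that $x\ast a\neq 0$ in the $\E_2$ case needs the one-line check that $m\alpha+1=0$ would give $m\alpha^2+\alpha+1=1\neq 0$.

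There is one loose end, which the paper's own proof shares. Your reduction ``Dedekind iff strong'' requires, for the only-if direction, that $m=0$ already forces non-Dedekind; your converse only produces a witness from a \emph{root} of the polynomial, so the case $m=0$ for $\E_1$ (where $mX^2+1=1$ has no roots) is not covered. In fact no witness exists there: in $\E_1(0,p)$ every one-dimensional subbrace is central and every two-dimensional subbrace contains $(0,0,1)$, so $\E_1(0,p)\cong\F_p\oplus\E_0(1,p)$ is Dedekind by Theorem~\ref{teo:descomp-strong->Dedekind}. This also shows that your blanket claim ``a one-dimensional subbrace $\langle a\rangle_+$ is an ideal iff $a\in\langle c\rangle_+$'' (and the framing ``fails to be Dedekind iff $\phi$ has a nonzero isotropic vector'') is false for non-strong extraspecial braces, although you never actually use the problematic half in your computations. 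Since the paper's necessity argument likewise starts from a nonzero root and says the $\E_1$ case is ``equivalent,'' this is a defect of the statement at the boundary value $m=0$ rather than of your proof relative to the paper's; for $0<m<p$ both arguments are complete.
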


\begin{proof}
Example~\ref{ex:extrasp} and Theorem~\ref{teo:descomp-strong->Dedekind} yield the result for the family $\E_0(m,p)$ and also yield the sufficient condition for the families $\E_1(m,p)$ and $\E_2(m,p)$. For the necessary condition we can argue on the family $\E_2(m,p)$, as the argument for $\E_1(m,p)$ is equivalent.

Assume that $\E_2(m,p)$ is Dedekind and that there exists $0\neq k \in \mathbb{F}_p$ such that $mk^2 + k + 1 = 0$. Consider $x = (k,1,0) \in \E_2(m,p) \setminus \zeta(\E_2(m,p))$. According to Example~\ref{ex:extrasp}, $x \ast x = (0,0,0)$. By Lemma~\ref{lema:a·a=0ciclic}, $\langle x \rangle = \langle x \rangle_+$. 

Take $y = (1,0,0)$. Then, $x \ast y = (0,0, mk)$ and $mk \neq 0$, as $m,k \neq 0$. Thus, $x \ast y\notin \langle x \rangle$ and so, $\langle x \rangle$ is not an ideal; a contradiction.
\end{proof}

\begin{exemple}
For specific primes $p$, Corollary~\ref{cor:E-ded<->arrels-pol} allows to establish conditions on the parameter $m$ to characterise Dedekind left braces. For example, for $p = 5$, $\E_1(m,p)$ is Dedekind if, and only if, $0 < m < p$ and $m\notin \{1,4\}$.
\end{exemple}

The rest of the section is devoted to show the structural part of the converse of this theorem. To this aim, it is imperative to study the first two terms of the socle series of a Dedekind left brace. Moreover, Theorems~\ref{teo:(strong)extras-(strong)bilinear} and~\ref{teo:classif-strong}, and Corollary~\ref{cor:E-ded<->arrels-pol} play also a prominent role.

\emph{From now on, for an arbitrary left brace $A$, $S$ denotes $\Soc(A)$ and $C$ denotes $\Soc_2(A)$}.

\begin{lema}
\label{lema:dedekind+Celemabelian-prop}
Let $A$ be a Dedekind left brace. Suppose that $(C,+)$ is an elementary abelian $p$-group for some prime $p$. Let $d \in C \setminus S$. Then, the following statements hold
\begin{enumerate}
\item $\langle d\ast d \rangle = \langle d \ast d \rangle_+$ and $\langle d \rangle = \langle d \rangle_+ \oplus \langle d\ast d\rangle_+$;
\item $d \ast a \in \langle d \ast d \rangle_+$ for every $a\in A$;
\item $\langle d \ast d\rangle_+ = \langle c \ast c \rangle_+$ for every $c\in C \setminus S$.
\end{enumerate}
\end{lema}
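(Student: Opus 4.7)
The plan is to prove the three parts in order, each leveraging the previous.

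For part (1), set $e := d \ast d$. Since $d \in C = \Soc_2(A)$ we have $e \in S$, so $e \ast e = 0$; by Lemma~\ref{lema:a·a=0ciclic}, $\langle e \rangle = \langle e \rangle_+$ has order $1$ or $p$, and it is an ideal of $A$ by the Dedekind hypothesis. Since $d \notin S$, the intersection $\langle d \rangle_+ \cap \langle e \rangle_+$ sits inside $\langle d \rangle_+ \cap S = 0$, so $\langle d \rangle_+ \oplus \langle e \rangle_+$ already sits inside $\langle d \rangle$ with cardinality $p \cdot |\langle e \rangle_+|$. For the reverse inclusion I would compute orders in the quotient $\langle d \rangle / \langle e \rangle_+$: the image $\bar d$ satisfies $\bar d \ast \bar d = 0$, hence Lemma~\ref{lema:a·a=0ciclic} yields $\langle \bar d \rangle = \langle \bar d \rangle_+$, which is cyclic of order $p$; since $\bar d$ generates $\langle d \rangle / \langle e \rangle_+$, this gives $|\langle d \rangle| = p \cdot |\langle e \rangle_+|$ and hence equality.

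Part (2) drops out of part (1): $d \ast a \in S$ because $d \in C$, and $d \ast a \in \langle d \rangle$ because $\langle d \rangle$ is an ideal; but the decomposition of part (1) together with $\langle d \rangle_+ \cap S = 0$ forces $\langle d \rangle \cap S = \langle d \ast d \rangle_+$.

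Part (3) is the main obstacle, and the key point will be to make $\ast$ effectively bilinear on $C$. First observe that by part (2), $d \ast d \neq 0$ for every $d \in C \setminus S$; otherwise $d \ast A = 0$ would place $d$ in $\Soc(A) = S$. The bilinearity identity I need is $(a + b) \ast y = a \ast y + b \ast y$ for $a, b \in C$ and $y \in A$. The obstacle is that $\ast$ is not left-additive in general; the saving fact is $C \subseteq \Ann_A^l(S)$. To see this, for $c \in C$ and $s \in S$, the ideal $\langle s \rangle = \langle s \rangle_+ \cong \C_p$ is $\lambda$-invariant, so $\lambda_c$ restricts to an element of $\Aut(\C_p)$, a group of order $p-1$; on the other hand, part (1) bounds the multiplicative order of $c$ by $p^2$, and $\gcd(p^2, p-1) = 1$ forces $\lambda_c|_{\langle s \rangle} = \mathrm{id}$, i.e.\ $c \ast s = 0$. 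Plugging this into \eqref{ast_prod} gives $(ab) \ast y = a \ast y + b \ast y$, and the easy identity $(s + x) \ast y = x \ast y$ for $s \in S$ (another instance of \eqref{ast_prod}) turns this into $(a + b) \ast y = (ab) \ast y = a \ast y + b \ast y$.

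With bilinearity in hand, part (3) reduces to a short case analysis. If $c + d \in S$, then $(c + d) \ast y = 0$ and bilinearity gives $c \ast y = -(d \ast y)$; taking $y = c$ and $y = d$ together with part (2) delivers $c \ast c \in \langle d \ast d \rangle_+$ and $d \ast d \in \langle c \ast c \rangle_+$, hence equality. Otherwise $c + d \in C \setminus S$, so the preliminary observation gives $(c+d) \ast (c+d) \neq 0$. Since $(c+d) \ast (c+d) = (c+d) \ast c + (c+d) \ast d$, at least one summand is nonzero; without loss of generality $(c + d) \ast c \neq 0$. By part (2) applied to $c + d$ and by bilinearity, $(c + d) \ast c = c \ast c + d \ast c$ lies in both $\langle (c + d) \ast (c + d) \rangle_+$ and $\langle c \ast c \rangle_+$ (the latter because $d \ast c \in \langle c \rangle \cap S = \langle c \ast c \rangle_+$ by the ideal/intersection argument from part (2)), so these two one-dimensional $\F_p$-subspaces coincide. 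A parallel decomposition $(c + d) \ast d = c \ast d + d \ast d$ then places $d \ast d = (c + d) \ast d - c \ast d$ inside $\langle c \ast c \rangle_+$, completing the proof.
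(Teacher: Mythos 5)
Your proof is correct. Parts (1) and (2) follow the paper's argument essentially verbatim (the paper passes to $\langle d+U\rangle$ with $U=\langle d\ast d\rangle_+$ rather than counting in the quotient $\langle d\rangle/\langle d\ast d\rangle_+$, which is cosmetic). Part (3) is where you genuinely diverge. The paper argues by contradiction: assuming $\langle d\ast d\rangle_+\neq\langle c\ast c\rangle_+$ it first deduces $c\ast d=d\ast c=0$, then computes $(c+d)\ast(c+d)=c\ast c+\lambda_c(d\ast d)\neq 0$ and reaches a contradiction because $d\ast(c+d)=d\ast d$ and $c\ast(c+d)=c\ast c$ must both lie in the single line $\langle(c+d)\ast(c+d)\rangle_+$. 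You instead first establish the structural fact $C\subseteq\Ann_A^l(S)$ --- via the coprimality of the multiplicative order of $c$ (a divisor of $|\langle c\rangle|\leq p^2$, by part (1)) with $|\Aut(\C_p)|=p-1$ --- which makes $\ast\colon C\times A\to S$ additive in the left argument, and then run a direct two-case analysis on whether $c+d\in S$. Each step checks out: the left-additivity $(a+b)\ast y=(ab)\ast y=a\ast y+b\ast y$ follows from \eqref{ast_prod} exactly as you say, and the containments $d\ast c\in\langle c\rangle\cap S=\langle c\ast c\rangle_+$ are justified by part (2). Your intermediate fact is a localized version of the power-automorphism argument the paper only deploys later, in Proposition~\ref{prop:dedekind->Soc_a=Z_a}, and it would in fact streamline the paper's own computation (it forces the scalar $k$ in the paper's $\lambda_c(u)=ku$ to equal $1$). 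Both proofs rest on the same two pillars --- the identification $\langle x\rangle\cap S=\langle x\ast x\rangle_+$ for $x\in C\setminus S$ and the auxiliary element $c+d$ --- so the difference is organizational: yours is direct and isolates a reusable bilinearity statement, while the paper's is a self-contained computation by contradiction.
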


\begin{proof}
1. Since $d\in C$, we have that $u:= d\ast d \in S$. If $u = 0$, then $\langle d \rangle = \langle d \rangle_+$ by Lemma~\ref{lema:a·a=0ciclic}. If $u \neq 0$, then $u\ast u = 0$ and by Lemma~\ref{lema:a·a=0ciclic}, it holds $U:= \langle u \rangle = \langle u \rangle_+$ is an ideal. Using again Lemma~\ref{lema:a·a=0ciclic}, it follows $\langle d+U\rangle = \langle d+ U\rangle_+ = \langle d \rangle_+ + U$. Therefore, $\langle d \rangle  \leq \langle d \rangle_+ + U$. The other inclusion is trivial. Since $C$ is elementary abelian and $d\in C\setminus S$, $\langle d \rangle_+ \cap \langle u \rangle_+ = \{0\}$ and the equality holds. 

2. We have that $\langle d \rangle$ is an ideal. Thus, $d \ast a \in \langle d \rangle \cap S$, for every $a\in A$. Assume that $x = kd + tu \in \langle d \rangle \cap S$, for some $0 \leq k,t < p$. Thus, $kd \in \langle d\rangle_+ \cap S$. Therefore, $k = 0$ as $d\notin S$. Hence, $\langle d \rangle \cap S = \langle u \rangle_+$ as desired.

3. Let $c \in C\setminus S$. Then, $v:= c \ast c \neq 0$, otherwise $c \in S$ by item~$2$. Thus, $\langle c \rangle = \langle c  \rangle_+ + \langle v\rangle_+$. Suppose that $\langle u \rangle_+ \neq \langle v \rangle_+$. It follows that $\langle u\rangle_+ \cap \langle v\rangle_+ = 0$, as $(C,+)$ is elementary abelian. Therefore, $d \ast c = c \ast d = 0$ by item~$2$.

Set $e:= c+d$. Then, $d \ast e = u$ and $c\ast e = v$. Thus, $u,v \in \langle e \rangle$, as $\langle e \rangle$ is an ideal. By item $1$, either $\langle e \rangle = \langle e \rangle_+$ is a cyclic $p$-group or $\langle e \rangle = \langle e \rangle_+ \oplus \langle e \ast e \rangle_+$. In the former case, it holds $\langle u \rangle_+ = \langle v \rangle_+$; in the latter case, $u,v \in \langle e \rangle \cap S = \langle e \ast e \rangle_+$, so that $\langle u \rangle_+ = \langle v \rangle_+$ also holds.
%Since the quotient left brace $C/S$ is abelian, $c+d+S = cd+S = cdS$, so that $c+d = cdu$, for some $u \in S$. It holds that
%\begin{align*}
% e\ast e & = (c+d)\ast (c+d) = (cdu)\ast c + (cdu) \ast d, 
%\end{align*}
%where
%\begin{align*}
%(cdu)\ast c & = (cd) \ast (u\ast c) + (cd)\ast c + u\ast c = (cd)\ast c = \\
%& = c \ast (d\ast c) + c\ast c + d\ast c = c\ast c = v
%\end{align*}
%and
%\begin{align*}
%(cdu)\ast d & = (cd)\ast (u\ast d) +
% (cd)\ast d + u\ast d = (cd)\ast d = \\
% & = c\ast (d\ast d) + c\ast d + d\ast d = c\ast u + u = \\
% & = cu - c-u + u = -c + cu = \lambda_{c}(u)
%\end{align*}
%Since $\langle u \rangle_+$ is an ideal, $\lambda_{c}(u) = ku$, for some $0< k < p$. Thus, $e \ast e = v + ku \neq 0$, as $\langle v \rangle_+ \cap \langle u \rangle_+ = 0$. By the first item, $\langle e \rangle = \langle e \rangle_+ \oplus \langle v + ku\rangle_+$, which is an ideal. Thus, $d\ast e, c\ast e\in \langle e \rangle \cap S = \langle v+ku\rangle_+$.
%(ja que e \ast e \neq 0)
%On the other hand, it holds
%  \begin{align*}
%  d \ast e& = d \ast (c+d) = d \ast c + d \ast d= u,\\
%  c \ast e & = c \ast (c+d) = c\ast c + c \ast d = v.
%   \end{align*}
%This contradicts the fact that $\langle u\rangle_+ \cap \langle  v\rangle_+ = 0$. Hence, $\langle u \rangle_+ = \langle v\rangle_+$.
\end{proof}

%\begin{corolari}
%\label{cor:2.4_Celementaryabelian->CastCorderp}
%Let $A$ be a Dedekind left brace. Suppose that $(C,+)$ is an elementary abelian $p$-group for some prime $p$. Then, $C\ast C$ is an ideal of $A$ of order $p$.
%\end{corolari}

%\begin{proof}
%By definition, $C \ast C = \langle d \ast u\mid d, u\in C \rangle_+ \subseteq S$. Since $S$ is abelian, $C \ast C \leq S$ and thus, $C \ast C$ is an ideal, as $A$ is Dedekind. 

%Let $d \in C$. If $d\in S$, then $d\ast a = 0$ for every $a\in A$. Suppose that $d\notin S$. By Lemma~\ref{lema:dedekind+Celemabelian-prop}, $d\ast u \in \langle d \ast d \rangle_+$, for every $u \in C$, and $\langle d \ast d \rangle_+ = \langle c \ast c \rangle_+$, for every $c\in C\setminus S$. Hence, $C \ast C = \langle d \ast d \rangle_+$ has order $p$.
%\end{proof}

\begin{corolari}
\label{cor:2.5_C=L+K_C/B<soc(A/B)}
Let $A$ be a Dedekind left brace. Suppose that $(C,+)$ is an elementary abelian $p$-group for some prime $p$. Then, $C = E \oplus Z$ where $E$, $Z$ are ideals of $A$ satisfying
\begin{enumerate}
\item $Z \leq S$;
\item $B:=C \ast C = E \ast E \leq \Soc(A)$ is an ideal of $A$ of order $p$ so that $C/B = \Soc(A/B)$;
\item $u\ast u \neq 0$ for all $u \in E \setminus B$.
\end{enumerate}
\end{corolari}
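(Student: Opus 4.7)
The plan is to construct the decomposition by picking additive complements of $B$ in $(S,+)$ and of $S$ in $(C,+)$, and then to verify the ideal properties using Lemma~\ref{lema:dedekind+Celemabelian-prop} together with the Dedekind hypothesis. The first half of~(2), that $B = C\ast C$ is an ideal of order $p$, is Corollary~\ref{cor:2.4_Celementaryabelian->CastCorderp}. For $C/B = \Soc(A/B)$, I would combine Lemma~\ref{lema:dedekind+Celemabelian-prop}(2) (which, together with $s\ast a = 0$ for $s\in S$, gives $C\ast A \leq B$) with the trivial inclusion $\Soc(A/B)\leq \Soc_2(A)/B = C/B$.

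Now let $Z$ be any additive complement of $B$ in $(S,+)$ and $W$ any additive complement of $S$ in $(C,+)$, and set $E := W\oplus B$, so that $C = E\oplus Z$ additively. Since $(S,+)$ is elementary abelian and $S$ is an abelian subbrace of $A$, every additive subgroup of $S$ is itself a subbrace of $A$, hence an ideal by the Dedekind hypothesis; in particular $Z$ is an ideal, yielding~(1). For~(3) (and the remaining part of~(2)), every $u\in E\setminus B$ writes as $u = w + b$ with $w\in W\setminus\{0\}$; since $W\cap S = \{0\}$ this forces $u\in C\setminus S$, and the contrapositive of Lemma~\ref{lema:dedekind+Celemabelian-prop}(2) gives $u\ast u \neq 0$. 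Consequently $\langle u\ast u\rangle_+ = B \leq E\ast E \leq C\ast C = B$, so $E\ast E = B$.

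The main obstacle is showing that $E$ is an ideal of $A$. Additive closure is clear by construction; multiplicative closure follows from $u\cdot v = u + v + u\ast v$ with $u\ast v \in C\ast C = B \leq E$; and $E\ast A \leq C\ast A \leq B \leq E$. The delicate step is $A\ast E \leq E$, which I would split into two cases. If $u \in B$, the ideal $B$ satisfies $\lambda_a(u)\in B$, so $a\ast u \in B$. If $u \in E\setminus B$, then as above $u\in C\setminus S$; Lemma~\ref{lema:dedekind+Celemabelian-prop}(1) yields $\langle u\rangle = \langle u\rangle_+ \oplus B$, which is an ideal of $A$ by the Dedekind hypothesis, so $a\ast u \in \langle u\rangle \cap S$. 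Since $u\notin S$ has prime additive order, $\langle u\rangle_+ \cap S = \{0\}$, and therefore $\langle u\rangle \cap S = B$, giving $a\ast u \in B \leq E$ and completing the verification.
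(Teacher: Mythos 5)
Your decomposition is essentially the paper's: you take $B=C\ast C$, an additive complement $Z$ of $B$ in $S$, and for $E$ the preimage of a complement of $S/B$ in $C/B$ (your $W\oplus B$ is exactly such a preimage), and items (1)--(3) are argued with the same ingredients (Corollary~\ref{cor:2.4_Celementaryabelian->CastCorderp} and Lemma~\ref{lema:dedekind+Celemabelian-prop}). The one step I would not accept as written is the claim that $a\ast u\in\langle u\rangle\cap S$ for $u\in E\setminus B$: membership of $u+B$ in $\Soc(A/B)$ only controls $u\ast a$, not $a\ast u$, and nothing established so far gives $a\ast u\in S$ (the paper only proves $\Soc=\zeta$ under extra finiteness/elementary-abelian hypotheses on all of $A$). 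Fortunately the detour is unnecessary: $\lambda$-invariance of the ideal $\langle u\rangle=\langle u\rangle_+\oplus B$ already gives $a\ast u\in\langle u\rangle\subseteq E$, so your conclusion survives. More broadly, the whole ``delicate step'' is redundant: once $E$ is a subbrace, the Dedekind hypothesis makes it an ideal for free --- though you should then also record closure under multiplicative inverses (e.g.\ $u^{-1}\in\langle u\rangle\subseteq E$), which you omit. The paper's route avoids all of this element-wise checking: since $C/B$ is an abelian left brace, \emph{every} additive subgroup of $C/B$ is a subbrace of the Dedekind quotient $A/B$, hence an ideal, so the complement $E/B$ can simply be chosen and its preimage $E$ is automatically an ideal of $A$. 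That is the cleaner argument, and the only thing your version buys is making explicit which star products land in $B$.
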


\begin{proof}
Firstly, observe that $C \ast C = \langle d \ast u\mid d, u\in C \rangle_+ \subseteq S$. Since $S$ is abelian, $C \ast C \leq S$ and, therefore, $C \ast C$ is an ideal of $A$ as $A$ is Dedekind. 

Let $d \in C$. If $d\in S$, then $d\ast a = 0$ for every $a\in A$. Suppose that $d\notin S$. By Lemma~\ref{lema:dedekind+Celemabelian-prop}, $d\ast u \in \langle d \ast d \rangle_+$, for every $u \in C$, and $\langle d \ast d \rangle_+ = \langle c \ast c \rangle_+$, for every $c\in C\setminus S$. Hence, $C \ast C = \langle d \ast d \rangle_+$ has order $p$.

Now, since $S \leq C$, $(S,+)$ is elementary abelian and $S = B \oplus Z$ for some subbrace $Z$ of $S$. Thus, $Z$ is also an ideal of $A$. Since the quotient $C/B$ is abelian, $S/B$ has a complement in $C/B$, say $E/B$, where $E$ is also an ideal of $A$. Thus, we can write $C/B = S/B \oplus E/B$ and it follows $C = E \oplus Z$. 

Let $u \in E \setminus B$. If $u \ast u = 0$ then $u\in S$ (otherwise, Lemma~\ref{lema:dedekind+Celemabelian-prop} leads to a contradiction). Thus, $u \in S \cap E = B$, a contradiction. Therefore, $u \ast u \neq 0$ and again by Lemma~\ref{lema:dedekind+Celemabelian-prop}, $E \ast E = C \ast C$.

Finally, let $a \in A$ and $d \in C\setminus B$.  If $d\notin S$, by Lemma~\ref{lema:dedekind+Celemabelian-prop}, $d \ast a \in \langle d \ast d \rangle_+ = C \ast C$. Hence, $C/B \leq \Soc(A/B)$. On the other hand, let $u+B \in \Soc(A/B)$. Then, for every $a \in A$, $u\ast a \in B$. Since $B \leq S$, it follows that $u + S \in \Soc(A/S) = C/S$. Hence, $C/B = \Soc(A/B)$. 
\end{proof}

The following theorem shows that $\Soc_2(A) = A$ holds for %hypermultipermutational
mutipermutational braces with elementary abelian additive $p$-group. % (see Remark~\ref{nota:hypersoc}).

\begin{teorema}
\label{teo:4.7_Ahypersoc+adelementp+sub=id->A=Soc2}
Let $A$ be a %hypermultipermutational
multipermutational Dedekind left brace whose additive group is an elementary abelian $p$-group for some prime $p$. Then, $A = \Soc_2(A)$.
\end{teorema}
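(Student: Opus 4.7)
The approach is by contradiction. Suppose $A \neq C := \Soc_2(A)$. Hypermultipermutationality passes to $A/C$, and a standard transfinite argument shows that if $\Soc(A/C)$ were trivial then every ordinal socle of $A/C$ would vanish, forcing $A = C$. Hence there exists $a \in A \setminus C$ with $a + C \in \Soc(A/C)$, so $a \ast b, b \ast a \in C$ for every $b \in A$. Invoke Corollary~\ref{cor:2.5_C=L+K_C/B<soc(A/B)} to write $C = E \oplus Z$, where $Z \leq S := \Soc(A)$, $B := E \ast E$ has order $p$, and $x \ast x \neq 0$ for every $x \in E \setminus B$. If $E = B$ then $C = S$, and the same transfinite vanishing argument applied to $A/S$ forces $A = S \leq C$, contradicting $a \notin C$. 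So I may assume $E \neq B$.

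Fix $u \in E \setminus B$ and set $v := u \ast u$, which generates $B$. Lemma~\ref{lema:dedekind+Celemabelian-prop} gives $u \ast a \in B$; moreover, since $\langle u \rangle = \langle u \rangle_+ \oplus B$ is an ideal of order $p^2$, we have $a \ast u \in \langle u \rangle$, say $a \ast u = ku + tv$. The key calculation is to evaluate $(ua) \ast u$ two ways: via identity~\eqref{ast_prod} on the one hand, and via the $\lambda$-homomorphism $\lambda_{ua} = \lambda_u \lambda_a$ applied to $u$ on the other. The two expressions agree only if the unknown coefficient $j$ in $u \ast a = jv$ satisfies $j = 1$, yielding the striking identity $u \ast a = v = u \ast u$ for every $u \in E \setminus B$.

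The next step is to deduce additivity of $u \mapsto u \ast a$ on $E$. Writing $u_1 + u_2 = u_1 u_2 \cdot \bigl(-(u_1 \ast u_2)\bigr)$ with $u_1 \ast u_2 \in B \leq S$, and applying~\eqref{ast_prod} using the fact that $S$ left-annihilates $A$, collapses to $(u_1 + u_2) \ast a = u_1 \ast a + u_2 \ast a$. Specialising $u_1 = u_2 = u$ gives $(2u) \ast a = 2v$. For $p$ odd, $2u \in E \setminus B$, so the identity from the previous paragraph applied to $2u$ together with a direct $\lambda$-computation of $(2u) \ast u = 2v$ yields $(2u) \ast a = (2u) \ast (2u) = 4v$. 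Hence $2v = 4v$, forcing $2v = 0$, impossible for odd $p$.

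The main obstacle is the case $p = 2$, where the doubling trick degenerates. The plan there is a case analysis on the components of $a \ast u = ku + tv$ and $a \ast a = \alpha u + \beta v + z_0$ with $z_0 \in Z$. Multiplicative associativity $a \cdot a^2 = a^2 \cdot a$, unfolded through the $\lambda$-homomorphism using $u \ast a = v$ and the formulas above, provides linear relations that exclude $k = 1$ outright and force $t = 1$ when $k = 0$. When additionally $\alpha = 1$, iterating $a^n$ and comparing with the expected period contradicts associativity; when $\alpha = 0$ (so $a \ast a \in B \oplus Z$), the remaining configuration makes $a + u + v$ satisfy $(a + u + v) \ast x = 0$ for every $x \in A$ by linearity in the second argument, so $a + u + v \in \Soc(A) \leq C$ and hence $a \in C$, the final contradiction. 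The delicate point is this $p = 2$ analysis: without the doubling argument, one must combine multiplicative associativity inside $\langle a \rangle$ with the Dedekind ideal condition on the small cyclic subbraces $\langle a + u \rangle$ and $\langle a + u + v \rangle$ to collapse every possibility.
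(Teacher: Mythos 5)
Your key step does not hold up. You claim that evaluating $(ua)\ast u$ ``two ways'' --- once via identity~\eqref{ast_prod} and once via $\lambda_{ua}=\lambda_u\lambda_a$ --- forces the coefficient $j$ in $u\ast a=jv$ to equal $1$. But identity~\eqref{ast_prod} \emph{is} the relation $\lambda_{ua}=\lambda_u\lambda_a$ rewritten in terms of $\ast$, so the two computations are literally the same and yield no constraint: with $a\ast u=ku+tv$ and $v\in B\leq\zeta(A)$, both routes give $(ua)\ast u=ku+(k+t+1)v$, and the quantity $u\ast a$ never enters either side, so nothing about $j$ can be extracted. The identity $u\ast a=u\ast u$ for all $u\in E\setminus B$ is therefore unsupported, and everything downstream collapses: the odd-$p$ contradiction $2v=4v$ uses that identity twice (for $u$ and for $2u$), and the $p=2$ branch is built on $u\ast a=v$ as well. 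Independently of this, the $p=2$ case is only a sketch (``linear relations that exclude $k=1$'', ``iterating $a^n$ \dots contradicts associativity'') with none of the computations carried out, so even granting the identity the proof would be incomplete.

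For comparison, the paper avoids any such relation between $u\ast a$ and $u\ast u$. It reduces to $A=\Soc_3(A)$, applies Corollary~\ref{cor:2.5_C=L+K_C/B<soc(A/B)} to the quotient $A/B$ (not to $C$ itself) to produce an element $a$ with $d:=a\ast a\notin\Soc(A)$ and $b:=d\ast d\neq 0$, and then computes $(uu)u$ and $u(uu)$ for $u=a+d+b$, obtaining $(uu)u-u(uu)=b\neq 0$, which contradicts associativity of the multiplicative group. The decisive nonzero quantity is $(a\ast a)\ast(a\ast a)$, two levels down the right series. If you want to salvage your outline, you need a genuine external input to pin down $u\ast a$ --- for example an associativity computation of the paper's type, or the Dedekind condition applied to further one-generated subbraces such as $\langle a+u\rangle$ --- rather than two restatements of the same identity.
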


\begin{proof}
Suppose that $A\neq \Soc_2(A)$. Then, $\Soc_3(A) \neq \Soc_2(A)$. Call $S = \Soc_1(A)$, $C = \Soc_2(A)$, and $\bar{A} = \Soc_3(A)$. By Corollary~\ref{cor:2.5_C=L+K_C/B<soc(A/B)}, $B = C \ast C \leq S$ has order $p$ and $C/B = \Soc(A/B)$. Hence, $\Soc_2(A/B) = \bar{A}/B$. Moreover,  Proposition~\ref{prop:dedekind->Soc_a=Z_a} yields $S = \zeta(A)$ and $\Soc(A/B) = \zeta(A/B)$. Thus, $B \leq \zeta(A)$.

Applying Corollary~\ref{cor:2.5_C=L+K_C/B<soc(A/B)} to $A/B$, we can write $\bar{A}/B=  (E/B)\oplus (Z/B)$, where $E$ and $Z$ are ideals of $A$ such that 
\begin{itemize}
\item $Z/B \leq \zeta(A/B)$;   
\item $D/B:= (\bar{A} \ast \bar{A})/B = (E \ast E)/B \leq \zeta(A/B)$ is an ideal of $A/B$ of order $p$ and 
\[ \text{$(\bar{A}/B)/(D/B)$ is abelian, or equivalently, $\bar{A}/D$ is abelian;}\]
\item $u\ast u \notin B$ for every $u\in E\setminus D$. 
\end{itemize}

Let $a \in \bar{A} \setminus D$ and set  $d:= a \ast a \neq 0$. Thus, $d \in D \leq C$ and $\langle d + B \rangle_+ = D/B$. If $d\ast d = 0$ then $d \in S$ (otherwise, Lemma~\ref{lema:dedekind+Celemabelian-prop} yields a contradiction). Thus, $D = \langle d \rangle_+ \oplus B \leq S$. Therefore, $\bar{A}\ast \bar{A} \leq S$ and so $\Soc_2(A) = \bar{A}$, a contradiction. Then, we can assume that there exists $a \in \bar{A} \setminus D$ such that $a \ast a = d \in D\leq C\setminus S$ and $d \ast d \neq 0$. Set $0 \neq b:= d \ast d \in C \ast C = B$. Thus, $\langle b \rangle_+ = B \leq \zeta(A)$, as $B$ has prime order. Hence, it follows that $\langle a \rangle = \langle a \rangle_+ \oplus \langle d \rangle_+ \oplus \langle b \rangle_+$ is a subbrace, and therefore, an ideal of $A$.

Take $0 \leq \alpha, \beta < p$ such that $a \ast d = \alpha b = b_1$ and $d \ast a = \beta b = b_2$, and let $u:= a + d + b$. We know that $uu = u \ast u + 2u$. Since $d+B \in \zeta(A/B)$, it follows that $a+d + B = ad +B = adB$. Thus, there exists $w\in B \leq \zeta(A)$ such that $a+d = adw$. Then, it holds that
\begin{align*}
u \ast u & = (a+d+b)\ast (a+d+b) = (adw) \ast (a+d+b) = \\
& = (ad) \ast (a+d+b) = (ad) \ast a + (ad) \ast d + (ad) \ast b = \\
& = a \ast (d \ast a) + d \ast a + a \ast a + a \ast (d \ast d) + d \ast d + a \ast d = \\
& = a \ast b_2 + b_2 + d + a \ast b + b + b_1 = \\
& = b_2 + d + b + b_1 = d + (\alpha + \beta + 1) b 
\end{align*}
Thus, $uu = u \ast u + 2u = 2a + 3d +  (3 + \alpha + \beta)b$. 

Observe that $a^2 = 2a + a \ast a = 2a + d$, and similarly, $d^2 = 2d + b$ so that 
\[ d^3 = d(2d+b) = 2d^2 - 2d + db = 4d + 2b - 2d + d + d \ast b + b = 3d + 3b\]
Since $d+B \in \zeta(A/B)$ and $B \leq \zeta(A)$, there exists $w_1 \in B$ such that
\begin{align*}
2a & = a^2 - d = a^2 + (p-1)d = a^2d^{p-1}w_1\\
3d & = d^3 - 3b = d^3 + (p-3)b = d^3b^{p-3}
\end{align*}
Moreover, we claim that $d^n \ast a = nb_2$ and $d^n \ast d = nb$, for every positive integer $n$. Indeed, if we assume that $d^n \ast a = nb_2$ and $d^n\ast d = nb$, for some $n \geq 1$, we get 
\begin{align*}
d^{n+1} \ast a & = d \ast (d^n \ast a) + d^n \ast a + d \ast a = 0 + d^n\ast a + d \ast a = (n+1)b_2\\
d^{n+1} \ast d & = d \ast (d^n \ast d) + d^n \ast d + d \ast d = 0 + nb  + b = (n+1) b
\end{align*}
Since $w_1, b_2,b \in B\leq \zeta(A)$, we obtain
\begin{align}
(2a) \ast a & = (a^2d^{p-1}w_1)\ast a = (a^2d^{p-1}) \ast a = \nonumber \\
& = a^2 \ast (d^{p-1} \ast a) + d^{p-1}\ast a + a^2 \ast a =  \nonumber \\ 
& = a^2 \ast ((p-1)b_2) + (p-1)b_2 + a \ast (a \ast a) + 2a\ast a = \nonumber \\
& = -b_2 + b_1 + 2d = 2d + (\alpha - \beta)b  \label{eq:2a ast a} \\
(2a) \ast d & = (a^2d^{p-1}w_1)\ast d = (a^2d^{p-1}) \ast d = \nonumber \\ 
& = a^2 \ast (d^{p-1}\ast d) + d^{p-1} \ast d  + a^2 \ast d =  \nonumber  \\
& = a^2 \ast ((p-1)b) + (p-1)b +  a \ast (a \ast d) + 2a \ast d = \nonumber\\
& = -b + 2b_1 = (2\alpha - 1)b  \label{eq:2a ast d} \\
(3d) \ast a & = (d^3b^{p-3}) \ast a = d^3 \ast a = 3b_2 = 3\beta b \label{eq:3d ast a}\\
(3d) \ast d & = (d^3b^{p-3}) \ast d = d^3 \ast d = 3b \label{eq:3d ast d}
\end{align}

Now, since $d+B \in \zeta(A/B)$, we can write $uu = 2a + 3d + (3+\alpha + \beta)b = (2a)(3d)w_2$, for some $w_2 \in B$. Then, applying Equations~\eqref{eq:2a ast a}--\eqref{eq:3d ast d} and the fact that $w_2,b,b_1,b_2\in \zeta(A)$, we obtain
\begin{align*}
(uu) \ast u & = \big((2a)(3d)w_2\big)\ast u = \big((2a)(3d)\big) \ast u = \\
& = (2a) \ast \big((3d) \ast u\big) + (3d) \ast u + (2a) \ast u \\
& = (2a) \ast \big( (3d) \ast a + (3d) \ast d + (3d) \ast b\big) + \\
& +  (3d) \ast a + (3d) \ast d + (3d) \ast b +  (2a) \ast a + (2a) \ast d + (2a) \ast b = \\
& = (2a) \ast (3\beta b + 3b)  + 3\beta b + 3b + 2d + (\alpha-\beta)b + (2\alpha-1)b = \\
& = 2d + (2 + 3\alpha + 2\beta)b
\end{align*}
Thus, $(uu)u = (uu) \ast u + uu + u = 3a + 6d + (6 + 4\alpha + 3\beta)b$. On the other hand, similarly we see
\begin{align*}
u \ast (uu) & = (adw) \ast (uu) = (ad) \ast (uu) = \\
& = a \ast \big(d \ast (uu)\big) + d \ast (uu)  + a \ast (uu) =\\
& = a \ast \Big(d \ast (2a) + d\ast (3d) +  d\ast \big((3 + \alpha + \beta)b\big)\Big) + \\
& + d \ast (2a) + d\ast (3d) +  d\ast \big((3 + \alpha + \beta)b\big) + \\
& + a \ast (2a) + a\ast (3d) +  a\ast \big((3 + \alpha + \beta)b\big) = \\
& = 2b_2 + 3b + 2d + 3b_1 = 2d + (3+ 3\alpha + 2\beta)b
\end{align*}
and so, $u(uu) = u\ast (uu) + uu + u = 3a + 6d + (7 + 4\alpha + 3 \beta)b$. Therefore, $(uu)u - u(uu) = b \neq 0$, and we arrive to a contradiction. Hence, the theorem holds.
\end{proof}

\begin{corolari}
Let $A$ be a finite Dedekind left brace whose additive group is an elementary abelian $p$-group for some prime $p$. Then, $A$ is centrally nilpotent of level $2$.
\end{corolari}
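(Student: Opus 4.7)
The plan is to chain together the main results of Sections~\ref{sec:dedekind} and~\ref{sec:dedekind_elem-abel}; essentially nothing new needs to be proved. First, by Theorem~\ref{teo:Dedekind->central-nilp}, $A$ is centrally nilpotent, so its socle series terminates at $A$ in finitely many steps and $A$ is hypermultipermutational. Since $(A,+)$ is finite and elementary abelian in the standard sense, it is an elementary abelian $p$-group for some prime~$p$; if one prefers not to adopt this convention, one may instead use Proposition~\ref{prop:ded->descomp-sylow} to decompose $A=\bigoplus_{p\in \pi(A)}I_p$ into ideals whose additive groups are elementary abelian $p$-groups, observing that each $I_p$ is itself a finite hypermultipermutational Dedekind left brace and that a finite direct sum of braces of central nilpotency length at most~$2$ again has length at most~$2$.

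Once reduced to the $p$-primary case, Theorem~\ref{teo:4.7_Ahypersoc+adelementp+sub=id->A=Soc2} applies verbatim and yields $\Soc_2(A)=A$. Finally, Proposition~\ref{prop:dedekind->Soc_a=Z_a} gives $\zeta_n(A)=\Soc_n(A)$ for every positive integer~$n$ under the present hypotheses, so in particular $\zeta_2(A)=\Soc_2(A)=A$, which is precisely the assertion that $A$ has central nilpotency length at most~$2$.

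There is no real obstacle: all the structural and computational work has been carried out in the three results invoked above. The only bookkeeping point is the passage to a single prime, which is handled either by the conventional meaning of \emph{elementary abelian} or by invoking the Sylow decomposition and the compatibility of the upper central series with finite direct sums of ideals.
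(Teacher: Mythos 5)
Your proof is correct and takes essentially the same route as the paper, whose own proof simply cites Theorem~\ref{teo:Dedekind->central-nilp} and Theorem~\ref{teo:4.7_Ahypersoc+adelementp+sub=id->A=Soc2}. You additionally make explicit two steps the paper leaves implicit---that central nilpotency yields the hypermultipermutational hypothesis needed for Theorem~\ref{teo:4.7_Ahypersoc+adelementp+sub=id->A=Soc2}, and that Proposition~\ref{prop:dedekind->Soc_a=Z_a} converts $\Soc_2(A)=A$ into $\zeta_2(A)=A$---which is a welcome clarification rather than a deviation.
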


\begin{proof}
It follows from Theorems~\ref{teo:Dedekind->central-nilp} and~\ref{teo:4.7_Ahypersoc+adelementp+sub=id->A=Soc2}.
\end{proof}

Now, we can state the converse of Theorem~\ref{teo:descomp-strong->Dedekind}.
\begin{teorema}
\label{teo:Ahypersoc+Dedekind->descomp-strong}
Let $A$ be a %hypermultipermutational
multipermutational Dedekind left brace such that $(A,+)$ is an elementary abelian $p$-group, $p$ a prime. Then, $A$ is an extraspecial brace satisfying
\begin{enumerate}
\item $A = E \oplus Z$ where $E$ and $Z$ are ideals of $A$, with $E$ strong extraspecial;
\item $A \ast A = E \ast E$ has order $p$;
\item $Z \oplus (E \ast E) = \zeta(A) = \Soc(A)$.
\end{enumerate}
\end{teorema}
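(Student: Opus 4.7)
The plan is to synthesise the preceding results of the section rather than grind fresh calculations. The first move is to invoke Theorem~\ref{teo:4.7_Ahypersoc+adelementp+sub=id->A=Soc2} to conclude $A = \Soc_2(A)$, which activates Corollary~\ref{cor:2.5_C=L+K_C/B<soc(A/B)} with $C = A$. That corollary immediately produces the decomposition $A = E \oplus Z$ with $E$, $Z$ ideals and $Z \leq S := \Soc(A)$, the ideal $B := A \ast A = E \ast E$ of order~$p$ with $A/B$ abelian, and the non-isotropy condition $u \ast u \neq 0$ for every $u \in E \setminus B$. One may assume $A$ is non-abelian, since otherwise the conclusion of being extraspecial has no content.

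I would then verify item~(3) using Corollary~\ref{cor:dedekind-inf->Soc_alpha=Z_alpha} to obtain $\zeta(A) = \Soc(A) = S$; together with the fact that $Z$ was chosen in Corollary~\ref{cor:2.5_C=L+K_C/B<soc(A/B)} as a complement of $B$ inside~$S$, this yields $\zeta(A) = S = Z \oplus B = Z \oplus (E \ast E)$. Item~(2) is recorded verbatim from Corollary~\ref{cor:2.5_C=L+K_C/B<soc(A/B)}, since $A \ast A = E \ast E = B$ and $|B| = p$.

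For item~(1), I would write $B = \langle c \rangle_+$ and check the four clauses in the definition of a strong extraspecial left brace for $E$. The additive group of $E$ is elementary abelian by restriction. The generator $c$ of $B \leq \zeta(A) \cap E$ lies in $\zeta(E)$. The quotient $E/\langle c \rangle_+ = E/B$ is abelian because $E \ast E = B$, and it is nonzero since $E = B$ would force $E$ to be cyclic of order $p$ with $E \ast E = 0$, contradicting $B \neq 0$. Finally, strongness is exactly the non-isotropy clause $u \ast u \neq 0$ for $u \in E \setminus B$ supplied by Corollary~\ref{cor:2.5_C=L+K_C/B<soc(A/B)}. The same four verifications carried out on $A$ instead of $E$ show that $A$ itself is extraspecial.

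There is no genuine obstacle once Theorem~\ref{teo:4.7_Ahypersoc+adelementp+sub=id->A=Soc2} is in hand: the hard content of the converse to Theorem~\ref{teo:descomp-strong->Dedekind} was precisely concentrated in that reduction to level~$2$, and the present statement is a clean packaging of Corollary~\ref{cor:2.5_C=L+K_C/B<soc(A/B)} in the language of (strong) extraspecial braces.
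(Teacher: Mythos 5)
Your proposal is correct and follows exactly the paper's route: the paper's own proof is the one-line deduction from Theorem~\ref{teo:4.7_Ahypersoc+adelementp+sub=id->A=Soc2} and Corollary~\ref{cor:2.5_C=L+K_C/B<soc(A/B)}, which you have simply unpacked. Your additional verifications (invoking Corollary~\ref{cor:dedekind-inf->Soc_alpha=Z_alpha} for $\zeta(A)=\Soc(A)$ and checking the extraspecial axioms for $E$) are accurate and fill in details the paper leaves implicit.
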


\begin{proof}
It follows from Theorem~\ref{teo:4.7_Ahypersoc+adelementp+sub=id->A=Soc2} and Corollary~\ref{cor:2.5_C=L+K_C/B<soc(A/B)}.
\end{proof}

In the finite case, applying Theorem~\ref{teo:classif-strong} and Corollary~\ref{cor:E-ded<->arrels-pol}, we can complete the description of Theorem~\ref{teo:Ahypersoc+Dedekind->descomp-strong}.

\begin{corolari}
\label{cor:Ahypersoc+finit++Dedekind->E0-1-2+polinomis}
Let $A$ be a finite Dedekind left brace such that $(A,+)$ is an elementary abelian $p$-group for some $p$ prime. Then, $A$ is an extraspecial brace $A$ satisfying
\begin{enumerate}
\item $A = E \oplus Z$ where $E$ and $Z$ are ideals of $A$, and $E$ is a strong extraspecial left brace of order at most~$p^3$;
\item $A \ast A = E \ast E$ has order $p$;
\item $Z \oplus (E \ast E) = \zeta(A) = \Soc(A)$.
\end{enumerate}
Furthermore, the following hold
\begin{enumerate}
\item if $|E| = p^2$, then $E$ is isomorphic to $\E_0(m,p)$ with $0 < m < p$;
\item if $|E| = p^3$, then either $E$ is isomorphic to $\E_1(m,p)$ with $0 < m < p$ such that $mX^2 +1$ has no roots in $\F_p$, or $E$ is isomorphic to $\E_2(m,p)$ with $0 < m < p$ such that $mX^2 + X +1$ has no roots in $\F_p$.
\end{enumerate}
\end{corolari}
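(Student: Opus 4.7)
The plan is to derive the statement as a direct assembly of three earlier results: Theorem~\ref{teo:Ahypersoc+Dedekind->descomp-strong} for the structural decomposition, Theorem~\ref{teo:classif-strong} for the classification of the strong extraspecial factor, and Corollary~\ref{cor:E-ded<->arrels-pol} for the polynomial restrictions. The only real work is checking that the hypotheses of Theorem~\ref{teo:Ahypersoc+Dedekind->descomp-strong} apply and that the extraspecial factor inherits the Dedekind property.

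First I would pass from the finite setting to the hypermultipermutational setting. Since $A$ is finite and Dedekind, Theorem~\ref{teo:Dedekind->central-nilp} gives that $A$ is centrally nilpotent. Central nilpotency implies right nilpotency, i.e.\ $\Soc_n(A)=A$ for some finite $n$ (Proposition~\ref{prop:caract_nilpot}); in particular $A$ coincides with its upper hypersocle $\Soc_\infty(A)$, so $A$ is hypermultipermutational. The additive group is an elementary abelian $p$-group by hypothesis. Applying Theorem~\ref{teo:Ahypersoc+Dedekind->descomp-strong} to $A$ immediately yields items (1), (2) and (3) of the corollary: a decomposition $A=E\oplus Z$ with $E$ strong extraspecial, $Z$ an ideal, $A\ast A=E\ast E$ of order $p$, and $Z\oplus(E\ast E)=\zeta(A)=\Soc(A)$.

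Next I would analyse the strong extraspecial factor $E$. Since $E$ is a subbrace of the Dedekind brace $A$ and the Dedekind property is inherited by subbraces (noted right after the definition of Dedekind left brace), $E$ is itself Dedekind. Moreover $E$ is finite, because $A$ is. Now Theorem~\ref{teo:classif-strong} classifies finite strong extraspecial left braces: $|E|\leq p^3$ and $E$ is isomorphic to one of $\E_0(m,p)$, $\E_1(m,p)$ or $\E_2(m,p)$ with $0<m<p$. If $|E|=p^2$ the classification forces the form $\E_0(m,p)$ with $0<m<p$, giving the first bullet. If $|E|=p^3$, then $E$ is $\E_1(m,p)$ or $\E_2(m,p)$; in either case, the fact that $E$ is Dedekind together with Corollary~\ref{cor:E-ded<->arrels-pol} forces the respective polynomial $mX^2+1$ or $mX^2+X+1$ to have no roots in $\F_p$, which is the content of the second bullet.

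I do not expect a genuine obstacle here, since all the substantive content has been produced upstream in Theorems~\ref{teo:Dedekind->central-nilp},~\ref{teo:classif-strong},~\ref{teo:Ahypersoc+Dedekind->descomp-strong} and Corollary~\ref{cor:E-ded<->arrels-pol}. The only subtle points to verify carefully are that finiteness really does place $A$ in the scope of Theorem~\ref{teo:Ahypersoc+Dedekind->descomp-strong} (which is why one needs Theorem~\ref{teo:Dedekind->central-nilp} first) and that the polynomial conditions listed for $\E_1$ and $\E_2$ in Corollary~\ref{cor:E-ded<->arrels-pol} are both necessary and sufficient, so that $E$ Dedekind forces exactly the stated restrictions.
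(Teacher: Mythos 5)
Your proposal is correct and follows essentially the same route as the paper, which derives this corollary directly from Theorem~\ref{teo:Ahypersoc+Dedekind->descomp-strong}, Theorem~\ref{teo:classif-strong} and Corollary~\ref{cor:E-ded<->arrels-pol}. You also correctly supply the one step the paper leaves implicit, namely that finiteness plus Theorem~\ref{teo:Dedekind->central-nilp} and Proposition~\ref{prop:caract_nilpot} make $A$ hypermultipermutational so that Theorem~\ref{teo:Ahypersoc+Dedekind->descomp-strong} applies.
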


\section*{Acknowledgements}
The third author is very grateful to the Conselleria d'Innovaci\'o, Universitats, Ci\`encia i
Societat Digital of the Generalitat (Valencian Community, Spain) and the Universitat de
Val\`encia for their financial support and grant to host researchers affected by the war in
Ukraine in research centres of the Valencian Community. He is sincerely grateful to the
first and second authors for their hospitality, support and care. The third author would like also to thank the Isaac Newton Institute for Mathematical Sciences, Cambridge, for support and hospitality during the Solidarity Supplementary Grant Program. This work was supported by EPSRC grant no EP/R014604/1. He is sincerely grateful to Agata Smoktunowicz.

We would like to thank the referee for the careful reading and the very useful comments that help us to improve this paper.

\section*{Competing Interests}
The authors have no relevant financial or non-financial interests to disclose.

\section*{Author Contributions}
All authors contributed to the study conception and design. 

\section*{Data Availibility Statement}
Data sharing not applicable to this article as no datasets were generated or analysed during the current study.

\bibliographystyle{plain}
\bibliography{bibgroup}
\end{document}